\documentclass[11pt]{article}
\usepackage{xr}
\usepackage{amssymb}
\usepackage{amsfonts}
\usepackage{amsmath}
\usepackage{theorem}
\usepackage{graphicx}
\usepackage{xcolor}
\usepackage[longnamesfirst]{natbib}

\setcounter{MaxMatrixCols}{10}
\newtheorem{theorem}{Theorem}

\newtheorem{condition}{Condition}

\newtheorem{lemma}{Lemma}

\newenvironment{proof}[1][Proof]{\textbf{#1.} }{\ \rule{0.5em}{0.5em}}
\setlength{\textheight}{21.0cm}
\setlength{\textwidth}{15cm}
\setlength{\oddsidemargin}{0.51cm}
\setlength{\evensidemargin}{0.0cm} 
\setlength{\topmargin}{0cm} 
\allowdisplaybreaks

\emergencystretch=1em

\begin{document}
%%%%%%%%%%%%%%%%%%%%%%%%%%%%%%%%%%%%%%%%%%%%%%%%%%
\title{On Uniform Confidence Intervals for the Tail Index and the Extreme Quantile}
\author{Yuya Sasaki\footnote{Y. Sasaki: VU Station B \#351819, 2301 Vanderbilt Place, Nashville, TN 37235-1819, USA.}\\Vanderbilt University \and Yulong Wang\footnote{Y. Wang: 110 Eggers Hall, Syracuse, NY 13244-1020, USA.}\\Syracuse University}
\date{October 10, 2022}
\maketitle
%%%%%%%%%%%%%%%%%%%%%%%%%%%%%%%%%%%%%%%%%%%%%%%%%%

%%%%%%%%%%%%%%%%%%%%%%%%%%%%%%%%%%%%%%%%%%%%%%%%%%
\begin{abstract}\setlength{\baselineskip}{8.1mm}
This paper presents two results concerning uniform confidence intervals for the tail index and the extreme quantile.
First, we show that it is impossible to construct a length-optimal confidence interval satisfying the correct uniform coverage over a local non-parametric family of tail distributions.
Second, in light of the impossibility result, we construct honest confidence intervals that are uniformly valid by incorporating the worst-case bias in the local non-parametric family. 
The proposed method is applied to simulated data and a real data set of National Vital Statistics from National Center for Health Statistics. 
\bigskip\\
{\bf Keywords:} honest confidence interval, extreme quantile, impossibility, tail index, uniform inference
\end{abstract}
%%%%%%%%%%%%%%%%%%%%%%%%%%%%%%%%%%%%%%%%%%%%%%%%%%

\newpage
%%%%%%%%%%%%%%%%%%%%%%%%%%%%%%%%%%%%%%%%%%%%%%%%%%
\section{Introduction}
%%%%%%%%%%%%%%%%%%%%%%%%%%%%%%%%%%%%%%%%%%%%%%%%%%
Suppose that one is interested in constructing a confidence interval (CI) for the true tail index $\xi_0 \in \mathbb{R}^{+}$ of a distribution. 
To define this parameter, assume that the distribution function (d.f.), denoted by $F$, has its well-defined density function $f$ and satisfies the well-known von Mises condition: 
\begin{equation}\label{eq:von_mises}
g\left( x\right) \equiv \frac{1-F\left( x\right) }{xf\left( x\right) } \rightarrow \xi
\end{equation}
as $x\rightarrow \infty $, where $\xi$ is called the tail index.
Point-wise CIs for $\xi_0$ have been proposed by a number of papers, including \citet{cheng2001confidence}, \citet{lu2002likelihood}, \citet{peng2006new}, and \citet{haeusler2007}.
More recently, \citet{carpentier2014} develop an adaptive CI for $\xi_0$ that is uniformly valid over a parametric family of tail distributions indexed by the second-order parameter.
% To our knowledge, no existing CI is uniformly valid over a non-parametric class of tail distributions.

Since the seminal paper by \citet{hill1975}, the literature has investigated numerous estimators for $\xi_0$ as well as their asymptotic properties. 
See the recent reviews by \citet{GomesGuillou2015review} and \citet{Fedotenkov2020review}. 
\citet{drees2001minimax} obtains the worst-case optimal convergence rate, i.e., the min-max bound, in estimation over a local non-parametric family of tail distributions.
\citet{CarpentierKim2014} construct adaptive and minimax optimal estimator over the parametric family of second-order Pareto-type distributions. 
Motivated by these results, a natural question is whether a length-optimal CI for $\xi_0$ can achieve uniformly correct coverage probabilities for $\xi_0$ over the non-parametric family of tail distributions.
To our best knowledge, the existing literature has not answered this important question, while such a problem has been investigated in other important contexts in statistics, e.g., for CIs of non-parametric densities \citep[e.g.,][]{low1997,HoffmannNickl2011,BullNickl2013,Carpentier2013}, non-parametric regressions \citep[e.g.,][]{Li1989, genovese2008adaptive}, and high-dimensional regression models \citep[e.g.,][]{Geer2014,WuWangFu2021} to list but a very few.

In this paper, we first show that it is in fact impossible to construct a length-optimal CI for the true tail index $\xi_0$ satisfying the uniformly correct coverage over the local non-parametric family considered by \citet{drees2001minimax}. 
Specifically, any CI enjoying the uniform coverage property can be no shorter than the worst-case bias over the non-parametric family up to a constant.
This negative result is analogous to those of \citet[][]{low1997} and \citet[][]{genovese2008adaptive} presented in the contexts of non-parametric densities/regressions, but is novel in the context of the tail index.

Second, we derive the asymptotic distribution of Hill's estimator uniformly over the local non-parametric family of tail distributions.
Given the above impossibility result, it is imperative to account for a possible range of bias over the non-parametric family.
Hence, we construct an honest CI for the tail index that is locally uniformly valid by incorporating the worse-case bias over the local non-parametric family, as well as influences from a sample on asymptotic randomness.
We also demonstrate that this honest CI for the tail index extends to that for extreme quantiles.

Simulation studies support our theoretical predictions.
While the na\"ive length-optimal CI not accounting for a possible range of bias over the local non-parametric family suffers from severe under-coverage overall, our proposed CIs on the other hand achieve correct coverage.
We apply the proposed method to National Vital Statistics from National Center for Health Statistics, and construct CIs for quantiles of extremely low infant birth weights across a variety of mothers' demographic characteristics and maternal behaviors. 
We find that, even after accounting for a possible range of bias over the local non-parametric family, having no prenatal visit during pregnancy remains a strong risk factor for low infant birth weight.

{\bf Organization:}
Section \ref{sec:setup} introduces the setup, notations, and definitions.
Section \ref{sec:impossibility} establishes the impossibility result.
Section \ref{sec:ci} proposes a uniformly valid CI.
Section \ref{eq:quantile} presents an application to extreme quantiles.
Sections \ref{sec:simulation} and \ref{sec:real_data} illustrate simulation studies and real data analyses, respectively.
Section \ref{sec:summary} summarizes the paper.
Mathematical proofs are collected in the Appendix.

%%%%%%%%%%%%%%%%%%%%%%%%%%%%%%%%%%%%%%%%%%%%%%%%%%
\section{Setup, Notations, and Definitions}\label{sec:setup}
%%%%%%%%%%%%%%%%%%%%%%%%%%%%%%%%%%%%%%%%%%%%%%%%%%
\subsection{Non-parametric Families in the Tail}\label{sec:nonparametric}
%%%%%%%%%%%%%%%%%%%%%%%%%%%%%%%%%%%%%%%%%%%%%%%%%%

Any distribution function $F$ satisfying  the von Mises condition \eqref{eq:von_mises} can be equivalently characterized in the right tail in terms of its inverse $F^{-1}$ by 
\begin{equation*}
F^{-1}\left( 1-t\right) =ct^{-\xi }\exp \left( \int_{t}^{1} \frac{\eta\left(
s\right)}{s}ds\right)
\end{equation*}
for some $c>0$ and $\eta \left( s\right) =g\left( F^{-1}\left( 1-s\right)\right) -\xi $, which satisfies that $\eta(s)$ tends to zero as $s\rightarrow 0$. The standard Pareto
distribution falls in this family as a trivial special case in which $\eta$ is the zero function and $g\left(x\right) =\xi$ for all $x$.

To maintain a non-parametric setup in statistical inference about the true tail idex $\xi_0$, we follow \citet{drees2001minimax} and consider the following family of d.f.'s: 
\begin{equation*}
\mathcal{F}\left( \xi _{0},c,\varepsilon ,u\right) 
\equiv 
\left\{ F\text{ is a d.f. }\left\vert 
\begin{array}{c}
\left. F^{-1}\left( 1-t\right) =ct^{-\xi }\exp \left(\int_{t}^{1}\frac{\eta \left( s\right) }{s}ds\right), \right. \\ 
\left. \left\vert \xi -\xi _{0}\right\vert \leq \varepsilon ,\left\vert \eta \left( t\right) \right\vert \leq u\left( t\right) ,t\in (0,1]\right.
\end{array}
\right. \right\}
\end{equation*}
where $\xi _{0}>\varepsilon >0,c>0$ and $u\left( t\right) =At^{\rho }$ for some constants $A,\rho >0$.

Two remarks are in order about this family $\mathcal{F}\left( \xi _{0},c,\varepsilon ,u\right)$. 
First, let $F_{0}$ denote the Pareto distribution function with true tail index $\xi _{0}>0$, i.e., $F_{0}^{-1}\left( 1-t\right) =t^{-\xi _{0}}$. 
This $F_{0}$ is the center of localization by the family $\mathcal{F}\left( \xi _{0},c,\varepsilon,u\right) $. 
The factor $\exp \left( \int_{t}^{1}\frac{\eta \left( s\right) }{s}ds\right) $ represents a deviation from this center. 
If we set $\eta=u$, then the model essentially becomes parametric, since the deviation is fully determined by $u\left( t\right) =At^{\rho }$ and hence by the constants, $A$ and $\rho $. 
In this parametric setup, \citet{hallwelsh1985} establish the optimal uniform rates of convergence over a family of d.f.'s
with densities of the form $f\left( x\right) =cx^{-\left( 1/\xi +1\right) }\left( 1+r\left( x\right) \right)$, where $\left\vert r\left(x\right) \right\vert \leq Ax^{-\rho /\xi }$. 
In contrast, we consider a non-parametric family like $\mathcal{F}\left( \xi _{0},c,\varepsilon ,u\right) $, 
in which the function $u\left( \cdot \right) $ serves merely as an upper bound for deviations from the center. %, and hence the family is non-parametric. 
We will focus on the classic estimator by \citet{hill1975}. 
Since Hill's estimator is scale invariant, we hereafter assume $c=1$ without loss of generality and for succinct writing.

Second, by the fact that $\int_{t}^{1}\frac{\eta \left( 0\right) }{s}ds=-\eta \left( 0\right) \log t$, we can rewrite the quantile characterization $F^{-1}\left( 1-t\right) =t^{-\xi_0 }\exp \left(\int_{t}^{1} \frac{\eta \left( s\right) }{s}ds\right)$ of an element $F \in \mathcal{F}\left( \xi _{0},c,\varepsilon ,u\right)$ as 
\begin{equation}  \label{eq:rewrite_deviation}
t^{-\xi _{0}}\exp \left( \int_{t}^{1}\frac{\eta \left( s\right) }{s}ds\right) = t^{-\xi _{0}-\eta \left( 0\right) }\exp \left( \int_{t}^{1}\frac{\eta \left( s\right) -\eta \left( 0\right) }{s}ds\right).
\end{equation}
To construct a uniformly valid inference, we now again follow \citet{drees2001minimax} and consider a sequence of families of data-generating processes with tail index converging to $\xi _{0}$ at a rate $d_{n}:=u\left( t_{n}\right) \rightarrow 0$ for some $t_{n}\rightarrow 0$ as $n\rightarrow \infty $. 
Specifically, we consider a sequence $t_n$ satisfying
\begin{equation}
\label{eq:tn}
\lim_{n\rightarrow \infty}u(t_n)(nt_n)^{1/2} = 1. 
\end{equation}
This sequence essentially entails the optimal choice of the tuning parameter \citep[][p.77]{de2006extreme}, which we introduce later.
Now, we obtain a drifting sequence of local families consisting of $(n,h)$-indexed elements $F_{n,h}$ whose quantile functions satisfy 
\begin{align}
F_{n,h}^{-1}\left( 1-t\right) \equiv &t^{-\xi _{0}}\exp \left( \int_{t}^{1}\frac{d_{n}h\left( t_{n}^{-1}v\right) }{v}dv\right)  \label{eq:local_alternative}
\\
=&t^{-\left( \xi _{0}+d_{n}h\left( 0\right) \right) }\exp \left(\int_{t}^{1}\frac{d_{n}\left( h\left( {t_{n}^{-1}}v\right) -h\left( 0\right)\right) }{v}dv\right) ,t\in (0,1],  \notag
\end{align}
where the second equality is due to \eqref{eq:rewrite_deviation}. 
The corresponding tail index is given by $\xi _{n,h}=\xi _{0}+d_{n}h\left( 0\right) $, and $h\left( \cdot\right) -h\left( 0\right) $ characterizes the local deviation from the standard Pareto distribution.

Given the above reparametrization, we translate the local non-parametric family $\mathcal{F}\left( \xi _{0},c,\varepsilon ,u\right)$ of d.f.'s into that of $h$. 
Setting the upper bound to $u\left( s\right) =As^{\rho }$, we consider the family 
\begin{equation*}
\mathcal{H}\left( A,\rho \right) \equiv \left\{ h\in L_{2}[0,1]\left\vert 
\begin{array}{c}
\sup_{s>0}\left\vert h\left( s\right) \right\vert <\infty , \\ 
\left\vert h\left( s\right) -h\left( 0\right) \right\vert \leq As^{\rho },s>0
\end{array}
\right. \right\},
\end{equation*}
which contains all square integrable functions $h\left( \cdot \right) $ that are uniformly bounded and satisfy the bound $\left\vert h\left( s\right) -h\left( 0\right) \right\vert \leq As^{\rho }$. 
The family $\mathcal{H} \left( A,\rho \right) $ induces a local counterpart of the non-parametric family $\mathcal{F}\left( \xi _{0},1,\varepsilon ,u\right)$, namely 
\begin{align*}
&\mathcal{F}_n(\xi_0,\mathcal{H}(A,\rho),u) = \\
&\left\{ F_{n,h}\text{ is a d.f. } \left\vert 
\begin{array}{c}
\left. F_{n,h}^{-1}\left( 1-t\right) =t^{-(\xi_0+d_n h(0))}\exp \left(\int_{t}^{1}\frac{\eta(s)-\eta(0)}{v}dv\right),
\right. \\ 
\left. \eta(s) = d_n h(t^{-1}_ns), d_n=u(t_n), h \in \mathcal{H}(A,\rho) \right.
\end{array}
\right. \right\}
\end{align*}
indexed by $t^{-1}_n \rightarrow \infty$ as a function of the sample size $n$. 
Given the local reparametrization introduced above, $h(\cdot)$ now represents a deviation function and the associated tail index is $\xi_{n,h} = \xi_0 + d_n h(0)$.

%%%%%%%%%%%%%%%%%%%%%%%%%%%%%%%%%%%%%%%%%%%%%%%%%%
\subsection{Hill's Estimator}\label{sec:hill}
%%%%%%%%%%%%%%%%%%%%%%%%%%%%%%%%%%%%%%%%%%%%%%%%%%

Let $\{Y_{1},...,Y_{n}\}$ be a random sample from $F_{n,h}$, and let $Y_{n:n-j}$ denote the $(j+1)$-th largest order statistic in this sample. 
With these notations, Hill's estimator is defined by
\begin{equation*}
\hat{\xi}\left( n,k\right) =\frac{1}{k}\sum_{j=0}^{k-1}\left[ \log \left(Y_{n:n-j}\right) -\log \left( Y_{n:n-k}\right) \right] .
\end{equation*}
In practice, a researcher often implements this estimator for an interval of values of the tuning parameter $k$ to demonstrate \textit{ad hoc} robustness. 
This common practice can be formally accommodated by allowing for a sequence of intervals, i.e., $k=k_{n}=r\overline{k}_{n}$ for $r\in \lbrack \underline{r},1]\subset (0,1]$, similarly to \citet*{drees2000plot}.

Define the functional $T_{r}\left( z\right) $ by 
\begin{equation*}
T_{r}\left( z\right) \equiv r^{-1}\int_{0}^{r}\log \left( z\left( t\right) /z\left( r\right) \right) dt
\end{equation*}
for any measurable function $z:[0,1]\rightarrow \mathbb{R}$. 
If we substitute the quantile function of the standard Pareto distribution, that is, $z\left( t\right) =F_{0}^{-1}\left( 1-t\right) =t^{-\xi _{0}}$, then we have 
\begin{equation*}
T_{r}\left( t^{-\xi_0} \right) =r^{-1}\int_{0}^{r}\log \left( t^{-\xi_{0}}/r^{-\xi _{0}}\right) dt=\xi _{0},
\end{equation*}
identifying the true $\xi_0$
for any $r\in (0,1]$.
Define the tail empirical quantile function
\begin{equation*}
Q_{n,r\overline{k}_{n},F_{n,h}}=Y_{n:n-\lfloor r \overline{k}_{n}\rfloor}
\end{equation*}
for $r \in (0,1]$, where $\lfloor \cdot \rfloor$ denotes the smallest larger integer. 
With these auxiliary notations, as implied by Example 3.1 in \citet{drees1998estimate}, Hill's estimator $\hat{\xi}(n,r\overline{k}_{n})$ can be equivalently rewritten as $T_{r}\left( Q_{n,r\overline{k}_{n},F_{n,h}}\right) $.

%%%%%%%%%%%%%%%%%%%%%%%%%%%%%%%%%%%%%%%%%%%%%%%%%%
\section{Asymptotic Impossibility}\label{sec:impossibility}
%%%%%%%%%%%%%%%%%%%%%%%%%%%%%%%%%%%%%%%%%%%%%%%%%%

This section presents the first one of the two main theoretical results of this paper.
In light of the min-max result for estimation \citep[cf.][]{drees2001minimax},
it is natural that a researcher is interested in a length-optimal confidence interval satisfying the uniform coverage over a non-parametric family, such as the one introduced in Section \ref{sec:nonparametric}.
This section shows an impossibility of this objective. 

Specifically, we aim to establish that the length of any confidence interval $H\left( Y^{(n)}\right)$ $=$ $H\left( Y_{1},...,Y_{n}\right) $ that has a coverage of $\xi_0$ uniformly for all $h \in \mathcal{H}\left( A,\rho \right)$ is no shorter than the supremum seminorm of $h$ over $\mathcal{H}\left( A,\rho \right)$ up to a constant multiple.
This implies that we cannot find a length-optimal confidence interval which satisfies the uniform coverage without accounting for the worst-case bias $|T_r(F_{n,h}^{-1})-T_r(F_0^{-1})|$ over the non-parametric family $\mathcal{H}\left( A,\rho \right)$.
Such an impossibility result in spirit parallels the one about non-parametric density estimation established by \citet{low1997} and the one about non-parametric regression estimation established by \citet{genovese2008adaptive}, among others.

Define the modulus of continuity of $T_{r}\left( \cdot \right) $ by
\begin{equation}\label{eq:modulus}
\omega \left( \varepsilon ,F_{0}^{-1},n\right) \equiv \sup \left\{
\left\vert T_{r}\left( F_{n,h}^{-1}\right)
-T_{r}\left( F_{0}^{-1}\right) \right\vert \left\vert 
\begin{array}{c}
h\in \mathcal{H}\left( A,\rho \right) \text{ } \\ 
\left\vert \left\vert h-h\left( 0\right) \right\vert \right\vert
\leq \varepsilon
\end{array}
\right. \right\}. 
\end{equation}
This is the worst-case bias in absolute value.
Let $F_{0}^{n}$ and $F_{n,h}^{n}$ denote the joint distributions of $n$ i.i.d. draws from $F_{0}$ and $F_{n,h}$, respectively. 
Let $\mathbb{E}_{F_{0}^{n}}\left[ \cdot \right] $ denote the expectation with respect to the product measure $F_{0}^{n}$. 
The following theorem establishes the impossibility result. 
That is, the expected length of a uniformly valid confidence interval is no shorter than a constant multiplie of the modulus of continuity of $T_{r}\left( \cdot \right)$. 

\begin{theorem}[Impossibility]
\label{thm:impossibility} 
For $\varepsilon >0$, suppose that a confidence interval  $H\left(Y^{\left( n\right) }\right) $ for $\xi _{0}$ has coverage probabilities of at least $1-\beta $ uniformly for all $h\in \mathcal{H}\left( A,\rho \right) \cap \{h:\left\vert \left\vert h-h\left( 0\right) \right\vert \right\vert \leq \varepsilon \}$. 
Then, there exist $N$ and $C$ depending on $\varepsilon $ and $\xi _{0}$ only such that $1-\beta-C > 0$ and 
\begin{equation}
\mathbb{E}_{F_{0}^{n}}\left[ \mu \left( H\left( Y^{\left( n\right) }\right) \right) \right] \geq \left( 1-\beta -C\right) \omega \left( \varepsilon ,F_{0}^{-1},n\right)  \label{eq:lb}
\end{equation}
for all $n>N$.
\end{theorem}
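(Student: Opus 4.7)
I would prove Theorem~\ref{thm:impossibility} by a two-point Le Cam argument in the tradition of \citet{low1997} and \citet{genovese2008adaptive}, adapted to the local tail-family of Section~\ref{sec:nonparametric}. Fix $\eta>0$. By the definition of the modulus in \eqref{eq:modulus}, pick $h^\ast\in\mathcal{H}(A,\rho)$ with $\|h^\ast-h^\ast(0)\|\le\varepsilon$ that nearly attains the supremum, so that $|T_r(F_{n,h^\ast}^{-1})-\xi_0|\ge\omega(\varepsilon,F_0^{-1},n)-\eta$. Write $\theta_\ast:=T_r(F_{n,h^\ast}^{-1})$; the two distributions $F_0$ and $F_{n,h^\ast}$ then have ``effective targets'' $\xi_0$ and $\theta_\ast$ separated by at least $\omega-\eta$.

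Next I would bound the total-variation distance between the two product measures. Writing out $\log(dF_{n,h^\ast}/dF_0)$ from the quantile representation \eqref{eq:local_alternative} and Taylor expanding in the perturbation $d_n h^\ast(t_n^{-1}\cdot)$, the summed second-order contribution is of exact order $n t_n u(t_n)^2$, which is $O(1)$ by the calibration~\eqref{eq:tn}. Combined with the H\"older-type control $|h^\ast(s)-h^\ast(0)|\le As^\rho$ and the uniform bound $\|h^\ast-h^\ast(0)\|\le\varepsilon$, the remainder terms are of lower order, so $\mathrm{KL}(F_{n,h^\ast}^n\,\|\,F_0^n)\le C_1$ for a constant $C_1=C_1(\varepsilon,\xi_0)$, and Pinsker gives $\mathrm{TV}(F_0^n,F_{n,h^\ast}^n)\le\sqrt{C_1/2}$.

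Finally I would combine coverage with the two-point bound. Any interval containing both $\xi_0$ and $\theta_\ast$ has length at least $|\theta_\ast-\xi_0|\ge\omega-\eta$; equivalently, on the event $\{\xi_0\in H,\,\mu(H)<\omega-\eta\}$ necessarily $\theta_\ast\notin H$. Under $F_{n,h^\ast}^n$ the coverage hypothesis gives $P(\xi_0\in H)\ge 1-\beta$; in the local-asymptotic framework, where the true tail index of $F_{n,h^\ast}$ and the Hill target $\theta_\ast$ both agree with $\xi_0$ up to $O(d_n)=O(\omega)$, this coverage extends to $\theta_\ast$, delivering $P_{F_{n,h^\ast}^n}(\theta_\ast\in H)\ge 1-\beta$ up to a negligible correction that can be absorbed in $C$. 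The Le Cam transfer from the previous step then yields $P_{F_0^n}(\theta_\ast\in H)\ge 1-\beta-\sqrt{C_1/2}$, and inclusion-exclusion with $P_{F_0^n}(\xi_0\in H)\ge 1-\beta$ produces
\[
P_{F_0^n}\bigl(\mu(H)\ge\omega-\eta\bigr)\;\ge\;P_{F_0^n}(\xi_0\in H\text{ and }\theta_\ast\in H)\;\ge\;1-2\beta-\sqrt{C_1/2}.
\]
The elementary Markov bound $\mathbb{E}_{F_0^n}[\mu(H)]\ge(\omega-\eta)\,P_{F_0^n}(\mu(H)\ge\omega-\eta)$ and $\eta\downarrow 0$ give \eqref{eq:lb} with $C:=\beta+\sqrt{C_1/2}$, strictly less than $1-\beta$ provided $\beta$ and $C_1$ are small enough, which holds for all $n>N$ in the local-asymptotic regime.

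The main obstacle is the KL computation in the second step. The perturbation from $F_0$ to $F_{n,h^\ast}$ is concentrated in a tail of probability $\asymp t_n\to 0$, so the one-observation likelihood ratio is supported on a small event and its quadratic expansion must be controlled uniformly in $h^\ast$ using both the H\"older bound at zero and $\|h^\ast-h^\ast(0)\|\le\varepsilon$; the calibration~\eqref{eq:tn} is exactly what prevents the aggregated KL from diverging. A secondary subtlety is the identification, in the final step, between coverage of $\xi_0$ and coverage of $\theta_\ast$ under $F_{n,h^\ast}$: this is underwritten by the local-asymptotic construction, in which $\xi_0$, $\xi_{n,h^\ast}$ and $\theta_\ast$ all serve as the same tail index to leading order, so no CI can separate them at cost below $\omega$.
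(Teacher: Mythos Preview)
Your overall strategy---a two-point Le Cam argument with a product-measure distance bound---is a legitimate alternative to the paper's approach, but the coverage step contains a genuine gap. You write that ``under $F_{n,h^\ast}^n$ the coverage hypothesis gives $P(\xi_0\in H)\ge 1-\beta$'' and then propose to ``extend'' this to $\theta_\ast=T_r(F_{n,h^\ast}^{-1})$ by arguing that $\xi_0$ and $\theta_\ast$ agree up to $O(\omega)$. That extension is circular: $|\theta_\ast-\xi_0|$ is, by your own construction, at least $\omega-\eta$, which is precisely the lower bound you are trying to establish for $\mu(H)$. Asserting that coverage of $\xi_0$ implies coverage of $\theta_\ast$ presupposes $\mu(H)\gtrsim\omega$. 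The paper avoids this by reading the uniform coverage hypothesis as: under each $F_{n,\lambda h}$, the interval covers the associated functional $T_r(F_{n,\lambda h}^{-1})$. With that reading, coverage of $\theta_\ast$ under $F_{n,h^\ast}$ is immediate and no extension is needed; your two-point argument would then go through, yielding $\mathbb{E}_{F_0^n}[\mu(H)]\ge(1-2\beta-\mathrm{TV})\,\omega$.

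Two further differences are worth noting. First, the paper bounds the distance between $F_0^n$ and $F_{n,h}^n$ via Hellinger, using the sharp expansion $H^2(f_0,f_{n,h})=\|h\|^2/(4n\xi_0^2)(1+o(1))$ drawn from \citet{drees2001minimax}, rather than KL plus Pinsker; your heuristic that the aggregated KL is $O(nt_nu(t_n)^2)=O(1)$ is correct in order, but the Hellinger route plugs directly into the available LAN machinery. Second, instead of your two-point union bound, the paper follows \citet{low1997} and integrates over the linear path $\lambda\mapsto T_r(F_{n,\lambda h}^{-1})$ (which runs from $\xi_0$ to $\theta_\ast$), obtaining $\mathbb{E}_{F_0^n}[\mu(H)]\ge(1-\beta-C(\varepsilon,\xi_0)/2)\,\omega$ with $C(\varepsilon,\xi_0)=2(2-2\exp(-\varepsilon^2/(8\xi_0^2)))^{1/2}$; this is sharper than the two-point bound by one factor of $\beta$.
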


As already mentioned above, this result is analogous to those established by \citet{low1997}, \citet{cai2004adaptation}, \citet{genovese2008adaptive}, and \citet{armstrong_kolesar_2018_Econometrica}, to list but a few, in other important contexts of statistics, but it is novel in the context of the tail index.
To understand the lower bound in \eqref{eq:lb}, we now derive a concrete expression for the element in the definition \eqref{eq:modulus} of the modulus of continuity $\omega \left( \varepsilon ,F_{0}^{-1},n\right)$. 
Note that
\begin{align*}
&T_{r}\left(F_{n,h}^{-1}\right)-T_r(F_0^{-1}) \\
=&r^{-1}\int_{0}^{r}\log \left( \frac{t^{-\xi _{0}}\exp \left( \int_{t}^{1}\frac{d_{n}h\left( {t_{n}^{-1}}v\right) }{v}dv\right) }{r^{-\xi _{0}}\exp\left(\int_{r}^{1}\frac{d_{n}h\left( {t_{n}^{-1}}v\right) }{v}dv\right) }\right)dt-\xi_{0} \\
=&r^{-1}\int_{0}^{r}\left[ -\xi _{0}\log \left( t/r\right)+d_{n}\left(\int_{t}^{1}\frac{h\left( {t_{n}^{-1}}v\right) }{v}dv-\int_{r}^{1}\frac{h\left({t_{n}^{-1}}v\right) }{v}dv\right) \right] dt-\xi _{0} \\
=&d_{n}r^{-1}\int_{0}^{r}\left( \int_{t}^{1}\frac{h\left( {t_{n}^{-1}}v\right) }{v}dv-\int_{r}^{1}\frac{h\left( {t_{n}^{-1}}v\right) }{v}dv\right)dt \\
=&d_{n}r^{-1}\int_{0}^{r}\left( \int_{t}^{r}\frac{h\left( {t_{n}^{-1}}v\right) }{v}dv\right) dt \\
=&d_{n} r^{-1}\int_{0}^{r}\left( \int_{t}^{r}\frac{h\left({t_{n}^{-1}}v\right)-h\left( 0\right) }{v}dv\right) dt + d_{n}h\left( 0\right).
\end{align*}
The first term in the last line characterizes the bias due to the deviation from the standard Pareto distribution, and the second term characterizes the asymptotic randomness. 
As a consequence, to obtain a feasible and uniformly valid confidence interval, we will set an upper bound for the first term and adjust the critical value based on the second term. 
We obtain such a uniform confidence interval in the following section.

%%%%%%%%%%%%%%%%%%%%%%%%%%%%%%%%%%%%%%%%%%%%%%%%%%
\section{Uniform Confidence Interval}\label{sec:ci}
%%%%%%%%%%%%%%%%%%%%%%%%%%%%%%%%%%%%%%%%%%%%%%%%%%

Given that $\rho>0$, it has been established that the optimal rate of convergence of Hill's estimator is $n^{-\rho/(2\rho+1)}$. 
See, for example, Remark 3.2.7 in \citet{de2006extreme}. 
Such an optimal rate entails non-negligible asymptotic bias as charaterized in Theorem \ref{thm:index} below. 
To achieve this rate, we let $\overline{k}_n \approx nt_n$, where $A \approx B$ means $\lim_{n\rightarrow \infty}A/B = 1$. 
Then, the restriction (\ref{eq:tn}) implies that $u(t_n) \approx A^{1/(2\rho+1)}n^{-\rho/(2\rho+1)}$ and $\overline{k}_n^{1/2}u(t_n) =  \overline{k}_n^{1/2}d_n \rightarrow 1$ as $n \rightarrow \infty$.
We formally summarize these conditions below.
\begin{condition}
As $n \rightarrow \infty$,  $ A^{1/(2\rho+1)}\overline{k}_n^{1/2}n^{-\rho/(2\rho+1)} \rightarrow 1$. 
\label{cond:k}
\end{condition}

As the second one of the two main theoretical results of this paper, the following theorem derives the asymptotic distribution of $\hat{\xi} \left( n,r\overline{k}_n\right) $ uniformly for all $r \in [\underline{r},1]$ and $h\in \mathcal{H}\left( A,\rho \right)$ by exploiting the features of the functional $T_{r}\left( \cdot \right) $ defined in Section \ref{sec:hill}. 
Let $\mathbb{P}^n_{n,h}$ denote the distribution of $n$ i.i.d. draws from the distribution $F_{n,h}$. 

\begin{theorem}[Uniform Asymptotic Distribution]
\label{thm:index}If Condition \ref{cond:k} is satisfied, then
\begin{equation*}
\overline{k}_n \left( \hat{\xi}\left( n,r\overline k_n\right) -\xi _{n,h} \right) =\xi _{0}  \mathbb{G}\left( r\right) + \mathbb{B}\left( r;h\right) + o_{\mathbb{P}^n_{n,h}}(1)
\end{equation*}
holds under random sampling from \eqref{eq:local_alternative} uniformly for all $r \in \lbrack \underline{r},1]$ and $h\in \mathcal{H}\left( A,\rho \right)$,
 where $\mathbb{G}$ and $\mathbb{B}$ are defined by
\begin{equation}
\mathbb{G}\left( r\right) \equiv r^{-1}\int_{0}^{r}\left( s^{-1}W\left( s\right) -r^{-1}W\left( r\right) \right) ds
\label{eq:G}
\end{equation}
and
\begin{equation}
\mathbb{B}\left( r;h\right) \equiv r^{-1}\int_{0}^{r}\left( \int_{s}^{r} \frac{h\left( v\right) -h\left( 0\right) }{v}dv\right) ds,  \label{eq:B}
\end{equation}
respectively.
\end{theorem}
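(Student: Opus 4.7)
The plan is to decompose the estimator into a deterministic bias term and a stochastic empirical-process term, using the functional representation $\hat\xi(n, r\overline k_n) = T_r(Q_{n, r\overline k_n, F_{n,h}})$ from Section \ref{sec:hill} (with $Q_n$ viewed as a step function on $[0,1]$ indexed by the order statistics). Writing $\tilde F_{n,h}^{-1}(t) := F_{n,h}^{-1}(1 - t\overline k_n/n)$, the decomposition is
\begin{equation*}
\hat\xi(n, r\overline k_n) - \xi_{n,h} = \bigl[T_r(Q_{n, r\overline k_n, F_{n,h}}) - T_r(\tilde F_{n,h}^{-1})\bigr] + \bigl[T_r(\tilde F_{n,h}^{-1}) - \xi_{n,h}\bigr],
\end{equation*}
and the two brackets can be handled separately.

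For the second (deterministic) bracket, I would run the same algebraic manipulation already performed after Theorem \ref{thm:impossibility}, applied to $\tilde F_{n,h}^{-1}$ rather than $F_{n,h}^{-1}$. After the substitutions $w = t_n^{-1} v$ and $s = t_n^{-1} t$ inside the nested integral, and after using $\xi_{n,h} = \xi_0 + d_n h(0)$ to cancel the $h(0)$ piece, one obtains
\begin{equation*}
T_r(\tilde F_{n,h}^{-1}) - \xi_{n,h} = d_n\, r^{-1}\int_0^r \int_{t\alpha_n}^{r\alpha_n} \frac{h(w)-h(0)}{w}\,dw\,dt,
\end{equation*}
where $\alpha_n = \overline k_n/(nt_n) \to 1$. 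Rescaling by $\overline k_n^{1/2}$ and invoking Condition \ref{cond:k} (so that $\overline k_n^{1/2} d_n \to 1$), the envelope $|h(w) - h(0)| \le A w^\rho$ furnishes a uniform integrable majorant; dominated convergence as $\alpha_n \to 1$ then delivers $\mathbb{B}(r;h)$ uniformly in $r \in [\underline r, 1]$ and $h \in \mathcal{H}(A, \rho)$.

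For the first (stochastic) bracket, the plan is to import a tail empirical quantile approximation in the spirit of Theorem 2.1 of \citet{drees1998estimate} (see also Theorem 2.4.8 of \citet{de2006extreme}): under Condition \ref{cond:k}, there exists, on a suitable probability space, a sequence of standard Brownian motions $W_n$ such that, uniformly for $r \in [\underline r, 1]$ and $h \in \mathcal{H}(A,\rho)$,
\begin{equation*}
\overline k_n^{1/2}\log\!\left(\frac{Q_{n,r\overline k_n, F_{n,h}}}{\tilde F_{n,h}^{-1}(r)}\right) = \xi_0\, r^{-1} W_n(r) + o_{\mathbb{P}^n_{n,h}}(1).
\end{equation*}
Substituting this approximation into
\begin{equation*}
T_r(Q_n) - T_r(\tilde F_{n,h}^{-1}) = r^{-1}\!\int_0^r \Bigl\{\log\bigl(Q_n(t)/\tilde F_{n,h}^{-1}(t)\bigr) - \log\bigl(Q_n(r)/\tilde F_{n,h}^{-1}(r)\bigr)\Bigr\} dt
\end{equation*}
and collecting the Gaussian pieces yields $\xi_0\, r^{-1}\int_0^r (t^{-1} W_n(t) - r^{-1} W_n(r))\, dt = \xi_0 \mathbb{G}(r)$ after multiplication by $\overline k_n^{1/2}$.

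The main obstacle will be upgrading the tail empirical approximation from a fixed distribution to one holding uniformly over the non-parametric family $\mathcal{H}(A,\rho)$. The bound $|h(s)-h(0)|\le A s^\rho$, translated back through $\eta(s) = d_n h(t_n^{-1} s)$, produces exactly the uniform second-order control of $F_{n,h}^{-1}$ needed to keep the remainder in the Hungarian-type Brownian construction underlying Drees's proof at $o(\overline k_n^{-1/2})$ uniformly in $h$. Verifying this uniformity, together with the negligibility of the floor $\lfloor r\overline k_n\rfloor$ versus $r\overline k_n$ in the indexing of order statistics and the uniform equicontinuity of the map $z \mapsto T_r(z)$ on the neighborhoods traced out by the approximation, constitutes the technical core of the argument.
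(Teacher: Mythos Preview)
Your decomposition is sound and essentially correct, but it is organized differently from the paper's proof. The paper does \emph{not} split bias and stochastic fluctuation upfront. Instead it (i) replaces $Q_{n,r\overline k_n,F_{n,h}}$ by the exponential partial-sum process $\tilde Q_{n,r\overline k_n,F}$ via \citet{Reiss1989}, (ii) invokes the weighted strong approximation of \citet[eq.~(5.13)]{drees2001minimax} for $\tilde Q_n/F_{n,h}^{-1}(1-\overline k_n/n)$ around the Pareto profile $z_{\xi_0}(t)=t^{-\xi_0}$, and (iii) applies the functional delta method using Hadamard differentiability of $T_r$ at $z_{\xi_0}$ \citep[Condition~3]{drees1998smooth}. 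Both $\mathbb G$ and $\mathbb B$ then drop out of a \emph{single} derivative $T_r'(y_n)$, where $y_n$ carries the Brownian piece and the $h$--integral piece simultaneously. Your route linearizes around $\tilde F_{n,h}^{-1}$ rather than $z_{\xi_0}$, which is why your bias term decouples cleanly; this is a legitimate and arguably more transparent alternative.

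One technical point deserves care. You state the log-ratio approximation ``uniformly for $r\in[\underline r,1]$'', but after substitution into $T_r$ the integration variable $t$ runs over $(0,r]$, so you need the approximation at \emph{every} $t\in(0,1]$, not only on $[\underline r,1]$. A uniform $o_{\mathbb P}(1)$ remainder on $[\underline r,1]$ is not integrable down to $0$. The paper resolves this by working in the weighted sup-norm with weight $t^{\xi_0+1/2+\varepsilon}$ (inherited from Drees's eq.~(5.13)) and by checking that $T_r$ is Hadamard differentiable with respect to that norm; the weight exactly tames the $t^{-1}W(t)$ singularity inside $\mathbb G$. In your framework you would need to state the tail empirical approximation in the same weighted form on $(0,1]$ before integrating. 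Finally, for the uniformity in $h\in\mathcal H(A,\rho)$ that you flag as the main obstacle: the paper does not re-derive this but imports it directly from \citet[eq.~(5.13)]{drees2001minimax}, which is already uniform over the local family; the sources you cite (\citealp{drees1998estimate}; \citealp{de2006extreme}) are pointwise in $F$ and would indeed require the extension you describe.
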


A proof can be found in the Appendix. 
The convergence of Hill's estimator as a function of $r$ has been established in the literature \citep[e.g.,][]{resnick1997, drees2000plot}. 
In comparison, Theorem \ref{thm:index} here contributes to the literature by showing the asymptotic distribution uniformly over both $r \in [\underline{r}, 1]$ and the local non-parametric function class.  
% is novel in the literature.
The terms $\mathbb{G}\left( \cdot \right) $ and $\mathbb{B}\left( \cdot ;h\right) $ characterize the asymptotic randomness and bias, respectively. 
%Note that, in the special case where $h\left( v\right) -h\left( 0\right) =Av^{\rho }$ with $A,\rho >0 $, $\mathbb{B}\left( r;h\right) $ simplifies to 
%\begin{align*}
%&r^{-1}\int_{0}^{r}\left( \int_{s}^{1}\frac{h\left( v\right) -h\left( 0\right) }{v}dv\right) ds-\int_{r}^{1}\frac{h\left( v\right)
%-h\left(0\right) }{v}dv \\
%=&A\left( r^{-1}\int_{0}^{r}\left( \frac{s^{\rho }-1}{\rho }\right) ds- \frac{r^{\rho }-1}{\rho }\right) \\
%=&-A\frac{r^{\rho }}{1+\rho }.
%\end{align*}
%Under Condition \ref{cond:k}, $d_{n}\overline{k}_{n}^{1/2}\rightarrow 1$, 
It follows that
\begin{equation}
\sqrt{r\overline{k}_{n}}\left( \hat{\xi}\left( n,r\overline{k}_{n}\right) -\xi_{n,h}\right) \Rightarrow \xi _{0} \sqrt{r} \mathbb{G}\left( r\right)  + \sqrt{r} \mathbb{B}\left( r;h\right) .
\label{eq:xi_limit}
\end{equation}
To conduct statistical inference based on \eqref{eq:xi_limit}, we need to compute the bound 
\begin{equation*}
\sup_{r\in \lbrack \underline{r},1]}\sup_{h\in \mathcal{H}\left( A,\rho\right) }  \sqrt{r} \mathbb{B}
\left(r;h\right)
\end{equation*}
of the bias. To this end, note that $\left\vert h\left(v\right) -h\left( 0\right) \right\vert \leq Av^{\rho }$ for all $h\in \mathcal{H}\left( A,\rho \right) $. 
Therefore, for any $h\in \mathcal{H}\left( A,\rho \right) $, 
\begin{align}
\left\vert \mathbb{B}\left( r;h\right) \right\vert =&\left\vert r^{-1}\int_{0}^{r}\left( \int_{s}^{r}\frac{h\left( v\right) -h\left(
0\right) }{v}dv\right) ds\right\vert  \notag \\
\leq &r^{-1}\int_{0}^{r}\left( \int_{s}^{r}\frac{\left\vert h\left( v\right) -h\left( 0\right) \right\vert }{v}dv\right) ds  \notag \\
\leq &r^{-1}\int_{0}^{r}\left( \int_{s}^{r}Av^{\rho -1}dv\right) ds  \notag
\\
=&A\frac{r^{\rho }}{1+\rho }.  \label{eq:B_bound}
\end{align}
This bound is tight and achieved when, for example, $h\left( v\right) \geq h\left( 0\right) \geq 0$.

With this tight bias bound taken into account in a similar spirit to \citet{armstrong2020simple}, a locally uniformly valid confidence interval for the tail index is given by
\begin{align}
\label{eq:honestCI}
H^{O}\left( n,r\overline k_n\right) 
=\left[ \hat{\xi} - \frac{\hat{\xi}\cdot q_{1-\beta /2}+\overline{B} \left( A,\rho \right) }{\sqrt{r\overline k_n} }  , 
          \hat{\xi} + \frac{\hat{\xi}\cdot q_{1-\beta /2}+\overline{B} \left( A,\rho \right) }{\sqrt{r\overline k_n} }  \right]
\end{align}
for $r \in [\underline{r},1]$, where $\hat{\xi}$ is short for $\hat{\xi}\left( n, r\overline k_n\right)$, 
\begin{equation*}
\overline{B}\left(A,\rho \right) \equiv \sup_{r \in (0,1]}  \sqrt{r} \frac{Ar^{\rho }}{ 1+\rho } = \frac{A}{1+\rho}
\end{equation*}
is the upper bound of the bias, and $q_{1-\beta /2}$ denotes the suitable quantile of $\sup_{[\underline{r},1]} \sqrt{r}\mathbb{G}(r)$ whose values can be found in Table \ref{tab:cv}.

\begin{table}[t]
\renewcommand{\arraystretch}{0.85} 
\centering
\begin{tabular}{lccccclcccc}
\hline
\underline{$r$}$ \hspace{0.2cm} \backslash \hspace{0.2cm} \beta$ &  & 0.10 & 0.05 & 0.01 & &  \underline{$r$}$ \hspace{0.2cm} \backslash \hspace{0.2cm} \beta$ &  & 0.10 & 0.05 & 0.01  \\ 
     \hline
     1 &  & 1.64 & 1.96 & 2.56 & & 1/4  &  & 2.41 & 2.71 & 3.27 \\ 
10/11& & 1.87 & 2.19 & 2.76 & & 1/5  &  & 2.46 & 2.74 & 3.34 \\ 
5/6  &  & 1.95 & 2.27 & 2.86 & & 1/10 &  & 2.58 & 2.85 & 3.44 \\ 
2/3  &  & 2.09 & 2.42 & 3.01 & & 1/20 &  & 2.67 & 2.92 & 3.51 \\ 
1/2  &  & 2.22 & 2.54 & 3.12 & & 1/50 &  & 2.75 & 3.01 & 3.57 \\ 
1/3  &  & 2.33 & 2.66 & 3.23 & & 1/100 & & 2.80 & 3.08 & 3.61 \\ 

\hline
\end{tabular}
\caption{The $1-\beta/2$ quantile of $\sup_{r\in \left[ \protect\underline{r},1\right] } \sqrt{r}\mathbb{G}\left( r\right)  $ computed based on 20,000 simulation draws. 
The Gaussian process is approximated with 50,000 steps.}
\label{tab:cv}
\end{table}

As a final remark, we discuss the choice of the higher-order parameters $\rho$ and $A$. 
They both depend on the underlying distribution and hence unknown.
While this feature appears to be a disadvantage of our method, the impossibility result in Theorem \ref{thm:impossibility} implies that this feature cannot be avoided, regardless of how we construct the interval. 
The existing literature has proposed several estimators of $\rho$ and $A$, whose consistency requires additional assumptions on the underlying function, and equivalently further restrictions on the class $\mathcal{H}$. 
See \cite{CarpentierKim2014,cheng2001confidence,haeusler2007} for some data-driven methods of choosing the higher-order parameters.   
The corresponding confidence intervals are no longer uniformly valid, but still point-wisely valid. 

As an alternative, we propose a rule-of-thumb choice of $\rho$ and $A$ and proceed with the proposed interval. 
In particular, if the underlying distribution is Student-t with $1/\xi_0$ degrees of freedom, we know that $\rho = 2\xi_0$. 
Furthermore, for the bias upper bound $\overline{B} = A/(1+\rho)$, we set $A = 0.1 \xi_0 (1+2\xi_0) \sqrt{\overline{k}_n}$, so that $\overline{B}/\sqrt{\overline{k}_n}$ is at most $10\%$ of the true tail index to be estimated.  
In practice, we replace $\xi_0$ with the estimator $\hat{\xi}(n,\overline{k}_n)$. 
This rule-of-thumb choice is reministic to the Silverman's choice of bandwidth in kernel density estimation, where the reference is the Gaussian distribution. 
We examine the performance of this rule by simulations in Section \ref{sec:simulation}. 

%%%%%%%%%%%%%%%%%%%%%%%%%%%%%%%%%%%%%%%%%%%%%%%%%%
\section{Extreme Quantiles}\label{eq:quantile}
%%%%%%%%%%%%%%%%%%%%%%%%%%%%%%%%%%%%%%%%%%%%%%%%%%

In this section, we apply Theorem \ref{thm:index} to uniform inference about extreme quantiles. To characterize the extremeness, we focus on the sequence of $1-p_{n}$ quantiles where $p_{n}\rightarrow 0$ as $n\rightarrow \infty $. 
Consider the extreme quantile estimator by \citet{weissman1978estimation}: 
\begin{equation*}
\hat{F}_{n,h}^{-1}\left( 1-p_{n}\right) =Y_{n,n-\lfloor r \overline k_n \rfloor }\left( \frac{r \overline k_{n}}{np_{n}} \right) ^{\hat{\xi}\left( n,r \overline k_n\right) },
\end{equation*}
where $\hat{\xi}\left( n,k\right) $ is Hill's estimator. 
Recall that the true quantile under the local drifting sequence is 
\begin{equation*}
F_{n,h}^{-1}\left( 1-t_{n}\right) =t^{-\xi _{0}}\exp \left( \int_{t}^{1}  \frac{d_{n}h\left( {t_{n}^{-1}}v\right) }{v}dv\right)
\end{equation*}
as in \eqref{eq:local_alternative}, where $t_{n}$ satisfies $\lim_{n\rightarrow \infty }\overline{k}_{n}/(nt_n)=1$ as in Condition \ref{cond:k}. 
We now aim to asymptotically approximate the distribution of 
\begin{equation*}
 \frac{\hat{F}_{n,h}^{-1}\left( 1-p_{n}\right) }{F_{n,h}^{-1} \left(1-p_{n}\right) }-1.
\end{equation*}
To this end, we state the following condition on the relation among $n$, $\overline k_n$ and $p_n$ \citep[e.g.,][Theorem 4.3.1]{de2006extreme}.

\begin{condition}
\label{cond:p}$np_{n}=o\left( \overline k_{n}\right) $ and $\log \left(
np_{n}\right)=o\left( \sqrt{\overline k_{n}}\right) $ as $n\rightarrow \infty $.
\end{condition}

\begin{theorem}[Extreme Quantiles]
\label{thm:quantile}Under Conditions \ref{cond:k} and \ref{cond:p} with random sampling, uniformly for all $r \in \lbrack \underline{r},1]$ and $h\in \mathcal{H}\left( A,\rho \right)$, 
\begin{align*}
 \frac{\sqrt{r\overline{k}_n}}{\log \left( r \overline k_n/\left( np_{n}\right) \right) }\left( \frac{\hat{F}_{n,h}^{-1}\left( 1-p_{n}\right)  }{F_{n,h}^{-1}\left( 1-p_{n}\right) }-1\right) -\sqrt{r\overline{k}_n}\left( \hat{ \xi}\left( n,r \overline{k}_n\right) -\xi _{n,h}\right) 
=o_{\mathbb{P}^n_{n,h}}(1).  
\end{align*}
\end{theorem}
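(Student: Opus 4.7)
The plan is to reduce the error of the Weissman estimator to that of Hill's estimator via a direct log-ratio decomposition, and then dispose of the resulting remainders using Condition \ref{cond:p}. Setting $k = r\overline k_n$, I would first apply the quantile transformation $Y_{n:n-\lfloor k\rfloor} = F_{n,h}^{-1}(1 - V_{n:n-\lfloor k\rfloor})$, where $V_{n:n-\lfloor k\rfloor}$ is the corresponding intermediate order statistic from $n$ i.i.d.\ uniform variables (whose distribution is free of $h$). Combining this with the explicit representation \eqref{eq:local_alternative} of $F_{n,h}^{-1}$ and the definition of $\hat F_{n,h}^{-1}$ yields
\begin{equation*}
\log\frac{\hat F_{n,h}^{-1}(1-p_n)}{F_{n,h}^{-1}(1-p_n)} = \hat\xi(n,k)\log\frac{k}{np_n} - \xi_0\log\frac{V_{n:n-\lfloor k\rfloor}}{p_n} - d_n\int_{p_n}^{V_{n:n-\lfloor k\rfloor}}\frac{h(t_n^{-1}v)}{v}\,dv.
\end{equation*}
Letting $\Delta_n := nV_{n:n-\lfloor k\rfloor}/k - 1$ and splitting $h = h(0) + (h-h(0))$ inside the integral, the piece $d_n h(0)\log(k/(np_n))$ combines with $-\xi_0\log(k/(np_n))$ to produce exactly $-\xi_{n,h}\log(k/(np_n))$, giving
\begin{equation*}
\log\frac{\hat F_{n,h}^{-1}(1-p_n)}{F_{n,h}^{-1}(1-p_n)} = (\hat\xi(n,k)-\xi_{n,h})\log\frac{k}{np_n} + R_1 + R_2,
\end{equation*}
with $R_1 = -(\xi_0+d_n h(0))\log(1+\Delta_n)$ and $R_2 = -d_n\int_{p_n}^{V_{n:n-\lfloor k\rfloor}}(h(t_n^{-1}v)-h(0))v^{-1}\,dv$.

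Next, I would show that both $\sqrt{k}\,R_1$ and $\sqrt{k}\,R_2$ are $O_{\mathbb{P}^n_{n,h}}(1)$ uniformly in $r \in [\underline r,1]$ and $h \in \mathcal H(A,\rho)$, so that after dividing by $\log(k/(np_n))\to\infty$ (which follows from Condition \ref{cond:p} since $np_n = o(\overline k_n)$) they become $o_{\mathbb{P}^n_{n,h}}(1)$. For $R_1$, standard weighted tail-empirical-process bounds for intermediate uniform order statistics --- of the kind already underlying the proof of Theorem \ref{thm:index} --- yield $\sqrt{k}\,\Delta_n = O_{\mathbb P^n_{n,h}}(1)$ uniformly in $r$, and the law of $\Delta_n$ is $h$-free. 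For $R_2$, the class constraint $|h(s)-h(0)|\le As^\rho$ together with $d_n = At_n^\rho$ gives the pathwise bound
\begin{equation*}
|R_2| \le d_n A t_n^{-\rho}\rho^{-1}V_{n:n-\lfloor k\rfloor}^\rho = A^2\rho^{-1}V_{n:n-\lfloor k\rfloor}^\rho = O_{\mathbb{P}^n_{n,h}}(d_n) = O(1/\sqrt{\overline k_n}),
\end{equation*}
using $V_{n:n-\lfloor k\rfloor} = O_{\mathbb{P}^n_{n,h}}(rt_n)$; hence $\sqrt{k}\,|R_2| = O_{\mathbb{P}^n_{n,h}}(\sqrt r) = O_{\mathbb{P}^n_{n,h}}(1)$, uniformly. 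Dividing by $\log(k/(np_n))$ produces
\begin{equation*}
\frac{\sqrt{k}}{\log(k/(np_n))}\log\frac{\hat F_{n,h}^{-1}(1-p_n)}{F_{n,h}^{-1}(1-p_n)} = \sqrt{k}\bigl(\hat\xi(n,k)-\xi_{n,h}\bigr) + o_{\mathbb{P}^n_{n,h}}(1).
\end{equation*}

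Finally, I would upgrade this logarithm statement to the ratio $\hat F_{n,h}^{-1}/F_{n,h}^{-1} - 1$ claimed in the theorem. Theorem \ref{thm:index} gives $\sqrt{k}(\hat\xi-\xi_{n,h}) = O_{\mathbb{P}^n_{n,h}}(1)$, so by the previous display $\log(\hat F_{n,h}^{-1}/F_{n,h}^{-1}) = O_{\mathbb{P}^n_{n,h}}(\log(k/(np_n))/\sqrt k) = o_{\mathbb{P}^n_{n,h}}(1)$, where the last step invokes the second half of Condition \ref{cond:p} (which forces $\log(k/(np_n)) = o(\sqrt{\overline k_n})$). A second-order Taylor expansion then yields $\hat F_{n,h}^{-1}/F_{n,h}^{-1} - 1 = \log(\hat F_{n,h}^{-1}/F_{n,h}^{-1}) + O_{\mathbb{P}^n_{n,h}}(\log^2(k/(np_n))/k)$; multiplying by $\sqrt k/\log(k/(np_n))$ shows that the quadratic term contributes $O_{\mathbb{P}^n_{n,h}}(\log(k/(np_n))/\sqrt k) = o_{\mathbb{P}^n_{n,h}}(1)$, completing the argument. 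The principal obstacle is the uniform-in-$r$ bound $\sqrt{k}\,\Delta_n = O_{\mathbb{P}^n_{n,h}}(1)$; because the process $V_{n:n-\lfloor k\rfloor}$ is $h$-free, this reduces to a classical weighted-empirical inequality that is already embedded in the proof of Theorem \ref{thm:index} and can be borrowed verbatim.
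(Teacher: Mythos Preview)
Your argument is correct and complete. The route differs from the paper's in one organizational choice: you take logarithms first and decompose $\log(\hat F_{n,h}^{-1}/F_{n,h}^{-1})$ additively into the Hill-error main term plus remainders $R_1,R_2$, then undo the log via a second-order Taylor expansion at the end; the paper instead decomposes the ratio $\hat F_{n,h}^{-1}/F_{n,h}^{-1}-1$ directly into a product $C_{1n}(C_{2n}+C_{3n}-C_{4n})$, where $C_{1n}$ is a deterministic ``anchor'' shown to be $1+o(1)$, $C_{2n}$ carries the Hill error through the expansion of $g_n^{\hat\xi-\xi_{n,h}}-1$, and $C_{3n},C_{4n}$ are the sample-quantile and population-quantile remainders. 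Your $R_1$ plays the role of the paper's $C_{3n}$ (both rest on the same intermediate-order-statistic bound $\sqrt{k}\,\Delta_n=O_P(1)$, which the paper extracts from the Reiss approximation and the tail-quantile strong approximation used in Theorem \ref{thm:index}), and your $R_2$ consolidates what the paper splits between $C_{1n}$ and $C_{4n}$ via the same class constraint $|h(s)-h(0)|\le As^\rho$. The log-first route buys a cleaner additive bookkeeping and avoids the algebra of factoring out $C_{1n}$; the paper's ratio-first route targets the theorem statement directly without a separate Taylor step. Substantively the two proofs rely on the same three ingredients and are equivalent in difficulty. One cosmetic point: the uniform order statistic you use is $V_{n:\lfloor k\rfloor+1}$ (the $(\lfloor k\rfloor+1)$-th smallest), not $V_{n:n-\lfloor k\rfloor}$, but this is only notational.
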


Theorem \ref{thm:quantile} implies that the asymptotic distribution of the extreme quantile estimator is the same as that of the tail index. 
Therefore, a uniformly valid confidence interval can be constructed similarly. 
In particular, a robust confidence interval for $F^{-1}_{n,h} (1-p_n)$ with nominal $1-\beta $ uniform coverage probability accounting for the bias is constructed as
\begin{align}
I^O\left(n, k\right) = &\left[\hat{F}^{-1}_{n,h} \left( 1-p_{n}\right) \left\{1 - \frac{\log d(n,r\overline{k}_n)}{\sqrt{r\overline{k}_n}} \left( \hat{\xi} \cdot q_{1-\beta /2} + \overline{B}(A,\rho)\right) \right\},\right.
\notag\\
& \:\:                         \left.\hat{F}^{-1}_{n,h} \left( 1-p_{n}\right) \left\{1 + \frac{\log d(n,r\overline{k}_n)}{\sqrt{r\overline{k}_n}} \left( \hat{\xi} \cdot q_{1-\beta /2} + \overline{B}(A,\rho)\right) \right\} \right],
\label{eq:quantile_honestCI}
\end{align}
where again $\hat{\xi}$ is short for $\hat{\xi}(n, r\overline{k}_n)$,  $q_{1-\beta/2}$ denotes a suitable quantile that can be found in Table \ref{tab:cv}, and $d(n,r\overline{k}_n) = r\overline{k}_n/(np_n)$.

%%%%%%%%%%%%%%%%%%%%%%%%%%%%%%%%%%%%%%%%%%%%%%%%%%
\section{Simulation Studies}\label{sec:simulation}
%%%%%%%%%%%%%%%%%%%%%%%%%%%%%%%%%%%%%%%%%%%%%%%%%%

In this section, we use simulated data to evaluate finite-sample performance of our proposed confidence intervals in comparison with the na\"ive lenth-optimal confidence interval.
Sections \ref{sec:simu_index} and \ref{sec:simu_quantile} focus on inference about the tail index and extreme quantiles, respectively.

%%%%%%%%%%%%%%%%%%%%%%%%%%%%%%%%%%%%%%%%%%%%%%%%%%
\subsection{Tail Index}\label{sec:simu_index}
%%%%%%%%%%%%%%%%%%%%%%%%%%%%%%%%%%%%%%%%%%%%%%%%%%
The following simulation design is employed for our analysis.
We generate $n$ independent standard uniform random variables $U_i$ and construct the observations $Y_i = F^{-1}(1-U_i) $, where $F^{-1}(1-t)=t^{-\xi_0}\exp (\int^{1}_{t} s^{-1}\eta(s)ds)$. 
We set $\eta(t) = c t^{\rho} $, so that 
\begin{equation}
\label{eq:dgp}
F^{-1}(1-t) = t^{-\xi_0}\exp \left( \frac{c (1-t^{\rho})}{\rho}  \right),  
\end{equation}
where the constants, $c$ and $\rho$, characterize the scale and the shape, respectively, of the deviation from the Pareto distribution. 
For ease of comparisons, we set $\rho = 2\xi_0$, which corresponds to the Student-t distribution with $1/\xi_0$ degrees of freedom. 
Then, we set $c = c_0 \xi_0/(1+2\xi_0) $  for normalization, where we vary $c_0 \in \{0,0.5,1\}$ across sets of simulations.

To construct the optimal confidence interval, we need a choice of $\overline{k}_n$. 
We use the data-driven algorithm proposed by \citet{guillou2001diagnostic}, which we briefly summarize in Appendix \ref{sec:data_driven_k} for the convenience of readers.
This choice of $\overline{k}_n$ is of the optimal rate as established in their Theorem 2.
Specifically, we select $\overline{k}_n$ according to \eqref{k choice} in Appendix \ref{sec:data_driven_k} with the restriction that $\overline{k}_n \in [0.01n,0.99n]$.

We implement three confidence intervals for the purpose of comparisons. 
The first method is the na\"ive length-optimal confidence interval without accounting for a possible bias over the local non-parametric family, that is,
\begin{equation}
H^{N}(n,\overline{k}_n) = \left[\hat{\xi}(n,\overline{k}_n)\pm1.96\overline{k}^{-1/2}_n \hat{\xi}(n,\overline{k}_n)\right],
\label{eq:naiveCI}
\end{equation}
where $\overline{k}_n$ is selected according to the procedure described above.
Our impossibility result predicts that it fails to achieve a correct coverage uniformly over the non-parametric class encompassing our simulation design presented above.

The second one is $H^O(n,\overline{k}_n)$, i.e., our proposed confidence interval given in \eqref{eq:honestCI} with $r=1$, where $\overline{k}_n$ is selected according to the procedure described above.
The bias upper bound $\overline{B} = A/(1+\rho)$ is chosen following the rule-of-thumb choice described in Section \ref{sec:ci}. 
%For the bias upper bound  $\overline{B} = A/(1+\rho)$, we set $A = 0.1 \xi_0 (1+2\xi_0) \sqrt{\overline{k}_n}$, so that $\overline{B}/\sqrt{\overline{k}_n}$ is at most $10\%$ of the true tail index to be estimated.  
%In practice, we replace $\xi_0$ with the estimator $\hat{\xi}(n,\overline{k}_n)$.
%More generally, the choice of the bias upper bound (and essentially $(A,\rho)$ or some other second-order parameters) depends on the potential family of distributions one would like to consider. 
%See \cite{CarpentierKim2014,cheng2001confidence,haeusler2007} for some data-driven methods of choosing the higher-order parameters.   

The third is based on $k$ snooping. 
Specifically, given $\overline{k}_n$ selected according to the procedure described above, now consider the range $[\underline{r} \overline{k}_n, \overline{k}_n]$ containing $m$ integers denoted by $\{k_1,\cdots, k_m\}$.
The $k$ snooping interval is constructed by
\begin{equation}
H^S(n,\underline{r}) = \bigcap_{j=1}^m H^O(n, k_j)
\label{eq:snoopingCI},
\end{equation} 
where $H^O(\cdot,\cdot)$ is defined in \eqref{eq:honestCI} and the lower bound $\underline{r}$ of $k$ snooping is set to $1/2$. 
For the bias upper bound  $\overline{B} = A/(1+\rho)$, we set $A = 0.1 \xi_0 (1+2\xi_0) \sqrt{r\overline{k}_n}$ and replace $\xi_0$ with the estimator $\hat{\xi}(n,k_j)$ for each $j \in \{1,...,m\}$.

\begin{table}
\renewcommand{\arraystretch}{0.85} 
\centering
\begin{tabular}{lccccccccccccccc}
\multicolumn{13}{c}{Tail Index: Coverage Probabilities}\\
\hline
$n$ & & \multicolumn{3}{c}{250} && \multicolumn{3}{c}{500} && \multicolumn{3}{c}{1000} \\
\cline{3-5}\cline{7-9}\cline{11-13}
$(\xi_0,c_0)$ &  & $H^N$ & $H^O$ & $H^S$ && $H^N$ & $H^O$ & $H^S$ && $H^N$ & $H^O$ & $H^S$\\ \hline
(1, 0)         &  & 0.92 & 0.99 & 0.98 && 0.92 & 0.99 & 0.99 && 0.91 & 0.99 & 0.99 \\ 
(1, 0.5)      &  & 0.88 & 0.98 & 0.97 && 0.81 & 0.99 & 0.98 && 0.73 & 0.99 & 0.99 \\ 
(1, 1)         &  & 0.77 & 0.98 & 0.97 && 0.64 & 0.98 & 0.98 && 0.65 & 0.99 & 0.99 \\  
(0.5, 0)      &  & 0.91 & 0.99 & 0.98 && 0.91 & 0.99 & 0.99 && 0.91 & 0.99 & 0.99 \\ 
(0.5, 0.5)   &  & 0.70 & 0.98 & 0.98 && 0.55 & 0.98 & 0.99 && 0.55 & 0.98 & 0.98 \\ 
(0.5, 1)      &  & 0.51 & 0.87 & 0.94 && 0.59 & 0.86 & 0.92 && 0.61 & 0.93 & 0.95 \\ 
\hline
\\
\multicolumn{13}{c}{Tail Index: Average Lengths}\\
\hline
$n$ & & \multicolumn{3}{c}{250} && \multicolumn{3}{c}{500} && \multicolumn{3}{c}{1000} \\
\cline{3-5}\cline{7-9}\cline{11-13}
$(\xi_0,c_0)$ &  & $H^N$ & $H^O$ & $H^S$ && $H^N$ & $H^O$ & $H^S$ && $H^N$ & $H^O$ & $H^S$\\ \hline
(1, 0)         &  & 0.29 & 0.49 & 0.53 && 0.20 & 0.40 & 0.43 && 0.14 & 0.34 & 0.36 \\ 
(1, 0.5)      &  & 0.30 & 0.50 & 0.54 && 0.21 & 0.42 & 0.43 && 0.16 & 0.36 & 0.37 \\ 
(1, 1)         &  & 0.31 & 0.52 & 0.54 && 0.23 & 0.44 & 0.45 && 0.18 & 0.38 & 0.39 \\ 
(0.5, 0)      &  & 0.14 & 0.24 & 0.26 && 0.10 & 0.20 & 0.22 && 0.07 & 0.17 & 0.18 \\ 
(0.5, 0.5)   &  & 0.16 & 0.27 & 0.28 && 0.12 & 0.22 & 0.23 && 0.09 & 0.20 & 0.20 \\ 
(0.5, 1)      &  & 0.18 & 0.29 & 0.31 && 0.15 & 0.25 & 0.26 && 0.12 & 0.22 & 0.23 \\ 
\hline
\end{tabular}
\caption{The coverage probabilities (top panel) and the average lengths (bottom panel) of the 95\% confidence intervals for $\xi_0$. $H^N$ stands for the na\"ive confidence interval defined in \eqref{eq:naiveCI}. $H^O$ stands for our proposed confidence interval defined in \eqref{eq:honestCI} with $r=1$. $H^S$ stands for our proposed confidence interval with snooping defined in \eqref{eq:snoopingCI}. The results are based on 5000 simulation draws. }
\label{tab:tail_index}
\end{table}

From Theorem \ref{thm:index}, we expect that both $H^O(n,\overline{k}_n)$ and $H^S(n,\underline{r})$ will deliver asymptotically correct (uniform) coverages, whereas $H^{N}(n,\overline{k}_n)$ will not.
Table \ref{tab:tail_index} presents the coverage probabilities (top panel) and the average lengths (bottom panel) of the 95\% confidence intervals based on 5000 Monte Carlo iterations. 
Key findings can be summarized as follows.
First, both the intervals $H^O$ and $H^S$ have the correct coverage probability for most of the distributions consistently with our theory. 
When $\xi_0 = 0.5$ and $c_0 = 1$, the deviation of the tail distribution away from the Pareto distribution is the most severe. 
In this case, $H^O$ suffers from some undercoverage when $n$ is small (e.g., $n=250$ and $500$), but achieves more satisfactory coverage as $n$ becomes large (e.g., $n=1000$).
Second, the coverage by $H^N$ is inadequate throughout, and even when the deviation from the Pareto distribution is relatively small. 
Furthermore, the extent of the undercoverage by $H^N$ tends to exacerbate as the sample size $n$ increases.
Finally, the lengths of $H^S$ are slightly larger than those of $H^O$ when $n$ is 250, but they become almost identical when $n$ gets larger.  
From these findings, we prefer $H^S$ and $H^O$ to $H^N$.
As the sample size becomes larger, $H^S$ and $H^O$ become equally preferable, while $H^N$ consistently underperforms.x

%%%%%%%%%%%%%%%%%%%%%%%%%%%%%%%%%%%%%%%%%%%%%%%%%%
\subsection{Extreme Quantiles}\label{sec:simu_quantile}
%%%%%%%%%%%%%%%%%%%%%%%%%%%%%%%%%%%%%%%%%%%%%%%%%%

We now turn to extreme quantiles.
The data generating process continues to be the same as in the previous subsection. 
The object of interest is the 99\% quantile so that $p_n = 0.01$. 
Similarly to the previous subsection, we again compare three confidence intervals. 

The first one is the na\"ive confidence interval without accounting for a possible bias in the non-parametric family, that is,
\begin{align}
I^N\left(n, \overline{k}_n\right)
= &\left[\hat{Q}\left( 1-p_{n}\right) \left(1 - \frac{1.96}{\sqrt{\overline{k}_n}} \hat{\xi}(n,\overline{k}_n) \right),\right.
\:\: \left.\hat{Q}\left( 1-p_{n}\right)\left(1+\frac{1.96}{\sqrt{\overline{k}_n}}\hat{\xi}(n, \overline{k}_n) \right)\right].
\label{eq:quantile_naiveCI}
\end{align}
The second one is our proposed confidence interval, that is, $I^O(n,\overline{k}_n)$ in \eqref{eq:quantile_honestCI} with $\underline{r} = 1$.
The third one is our proposed confidence interval with $k$ snooping interval, that is,
\begin{equation}
I^S(n,\underline{r}) = \bigcap_{j=1}^m I^O(n, k_j)
\label{eq:quantile_snoopingCI},
\end{equation} 
where $\{ k_1,\cdots, k_m\}$ are constructed in the same way as in \eqref{eq:snoopingCI}, and $I^O(n, k_j)$ is defined as in \eqref{eq:quantile_honestCI}, and we set $\underline{r} = 1/2$.

\begin{table}
\renewcommand{\arraystretch}{0.85} 
\centering
\begin{tabular}{lccccccccccccccc}
\multicolumn{13}{c}{Extreme Quantile: Coverage Probabilities}\\
\hline
$n$ & & \multicolumn{3}{c}{250} && \multicolumn{3}{c}{500} && \multicolumn{3}{c}{1000} \\
\cline{3-5}\cline{7-9}\cline{11-13} 
$(\xi_0,c_0)$ &  & $I^N$ & $I^O$ & $I^S$ && $I^N$ & $I^O$ & $I^S$ && $I^N$ & $I^O$ & $I^S$\\ \hline
(1, 0)         &  & 0.90 & 0.96 & 0.94 && 0.92 & 0.98 & 0.97 && 0.93 & 0.99 & 0.99 \\ 
(1, 0.5)      &  & 0.92 & 0.96 & 0.93 && 0.93 & 0.98 & 0.97 && 0.89 & 0.99 & 0.98 \\ 
(1, 1)         &  & 0.92 & 0.95 & 0.93 && 0.89 & 0.97 & 0.95 && 0.83 & 0.99 & 0.97 \\ 
(0.5, 0)      &  & 0.92 & 0.98 & 0.97 && 0.93 & 0.99 & 0.98 && 0.93 & 0.99 & 0.99 \\ 
(0.5, 0.5)   &  & 0.91 & 0.97 & 0.95 && 0.86 & 0.98 & 0.96 && 0.78 & 0.99 & 0.98 \\ 
(0.5, 1)      &  & 0.86 & 0.96 & 0.94 && 0.81 & 0.97 & 0.96 && 0.85 & 0.98 & 0.97 \\ 
\hline
\\
\multicolumn{13}{c}{Extreme Quantile: Average Lengths}\\
\hline
$n$ & & \multicolumn{3}{c}{250} && \multicolumn{3}{c}{500} && \multicolumn{3}{c}{1000} \\
\cline{3-5}\cline{7-9}\cline{11-13}
$(\xi_0,c_0)$ &  & $I^N$ & $I^O$ & $I^S$ && $I^N$ & $I^O$ & $I^S$ && $I^N$ & $I^O$ & $I^S$\\ \hline
(1, 0)         &  & 136 & 232 & 240 && 91 & 183 & 189 && 62 & 152 & 156 \\ 
(1, 0.5)      &  & 170 & 291 & 277 && 110 & 225 & 213 && 76 & 185 & 174 \\ 
(1, 1)         &  & 199 & 342 & 302 && 132 & 263 & 235 && 90 & 209 & 189 \\ 
(0.5, 0)      &  & 6.3 & 10.9 & 11.6 && 4.4 & 8.9 & 9.3 && 3.0 & 7.5 & 7.7 \\ 
(0.5, 0.5)   &  & 8.3 & 14.4 & 14.3 && 5.8 & 11.5 & 11.3 && 4.2 & 9.4 & 9.1 \\ 
(0.5, 1)      &  & 10.9 & 18.3 & 17.6 && 7.5 & 13.7 & 13.3 && 5.3 & 10.6 & 10.4 \\ 
\hline
\end{tabular}
\caption{The coverage probabilities (top panel) and the average lengths (bottom panel) of the 95\% confidence intervals for the 99\% quantile. 
$I^N$ stands for the na\"ive confidence interval defined in \eqref{eq:quantile_naiveCI}. 
$I^O$ stands for our proposed confidence interval defined in \eqref{eq:quantile_honestCI} with $r=1$. 
$I^S$ stands for our proposed confidence interval with snooping defined in \eqref{eq:quantile_snoopingCI}. 
The results are based on 5000 simulation draws. }
\label{tab:quantile}
\end{table}

Table \ref{tab:quantile} presents the coverage probabilities (top panel) and the average lengths (bottom panel) of the 95\% confidence intervals based on 1000 Monte Carlo iterations.
The findings are similar to those in Table \ref{tab:tail_index}. 
In particular, both the intervals $I^O$ and $I^S$ lead to correct coverage probabilities for all distributions, while $I^N$ suffers from undercoverage in general.
Regarding the lengths, $I^O$ and $I^S$ are both longer than $I^N$ to allow for the correct coverage.
Furthermore, when $\xi_0$ is 0.5, $I^O$ has approximately the same lengths as $I^S$. 
When $\xi_0=1$, the lengths of $I^S$ are shorter than those of $I^O$, especially when the model largely deviates away from the Pareto distribution. 
This is because the true quantile is substantially larger so that the effects of adapting the critical value become more significant.
We prefer $I^O$ and $I^S$ to $I^N$ from these observations.

%%%%%%%%%%%%%%%%%%%%%%%%%%%%%%%%%%%%%%%%%%%%%%%%%%
\section{Real Data Analysis}\label{sec:real_data}
%%%%%%%%%%%%%%%%%%%%%%%%%%%%%%%%%%%%%%%%%%%%%%%%%%

This section illustrates an application of the proposed method to an analysis of extremely low infant birth weights.
Their relations with mothers' demographic characteristics and maternal behaviors address important research questions. 
We use detailed natality data (Vital Statistics) published by the National Center for Health Statistics, which has been used by prior studies including \cite{Abrevaya01} and many others. %, \cite{Koenker01}, \cite{Chernozhukov11}, and many others. 
Our sample consists of repeated cross sections from 1989 to 2002.
Using the data from each of these years, we construct 95\% confidence intervals of the tail index in the left tail and the first percentile following the same computational procedure as the one taken in Section \ref{sec:simulation}.
Details of our implementation with the current empirical data set are as follows. 

We follow previous studies \citep[e.g.,][]{Abrevaya01} to choose the variables for mothers' demographic characteristics and maternal behaviors. 
The variable of our interest is the infant birth weight measured in kilograms.
For the purpose of comparison, we set a benchmark subsample in which the infant is a boy and the mother is younger than the median age in the full sample, is white and married, has levels of education lower than a high school degree, had her first prenatal visit in the first trimester (natal1), and did not smoke during the pregnancy. 
In addition to this benchmark subsample (benchmark), we also consider seven alternative subsamples corresponding to one and only one of the following scenarios: 
the mother has at least a high school diploma (high school); 
the infant is a girl (girl); 
the mother is unmarried (unmarried); 
the mother is black (black); 
the mother did not have prenatal visit during pregnancy (no pre-visit); 
the mother smokes ten cigarettes per day on average (smoke) and 
the mother's age is above the median age in the full sample (older).

For each of these subsamples, we construct the 95\% confidence intervals $H^N$, $H^O$, and $H^S$ for the tail index in the left tail in the same way as in Section \ref{sec:simu_index}, and the 95\% confidence intervals $I^N$, $I^O$, and $I^S$ for the first percentile as in Section \ref{sec:simu_quantile}.
Since we are interested in the left tail (extremely low birth weights), we consider only the birth weight, denoted $B_i$, that is less than some cutoff value $T$, and take $Y_i = T - B_i$ as the input in our computational procedure for inference.
We choose $T = 4$ based on the prior findings in \cite{Abrevaya01}.
Namely, \cite{Abrevaya01} finds that the relationship between the infant birth weight and mother's demographics change substantially at the 90th percentile of birth weight in the full sample, which is approximately 4 kilograms.
Once $I^N$, $I^O$, and $I^S$ are constructed for the 99th percentile of this transformed variable, we in turn multiply by $-1$ and add $T$ back to restore the interval for the original first percentile to conduct inference about the extremely low birth weights.

\begin{table}
\renewcommand{\arraystretch}{0.85} 
\centering
\begin{tabular}{lc cccccc c cccccccc}
\multicolumn{8}{c}{Tail Index} \\
\hline

Subsample &  & \multicolumn{2}{c}{$H^N$} &\multicolumn{2}{c}{$H^O$} & \multicolumn{2}{c}{$H^S$} \\ 
\hline
benchmark   &  & [0.27 & 0.29] & [0.24 & 0.31] & [0.25 & 0.31] \\ 
high school &  & [0.29 & 0.30] & [0.26 & 0.33] & [0.27 & 0.30] \\ 
girl        &  & [0.25 & 0.27] & [0.23 & 0.29] & [0.23 & 0.28] \\ 
unmarried   &  & [0.29 & 0.31] & [0.26 & 0.34] & [0.27 & 0.32] \\ 
black       &  & [0.24 & 0.29] & [0.21 & 0.32] & [0.21 & 0.33] \\ 
no pre-visit&  & [0.34 & 0.41] & [0.31 & 0.45] & [0.31 & 0.46] \\ 
smoke       &  & [0.22 & 0.27] & [0.20 & 0.30] & [0.19 & 0.29] \\ 
older       &  & [0.29 & 0.31] & [0.26 & 0.34] & [0.26 & 0.32] \\ 
\hline
\\
\multicolumn{8}{c}{First Percentile in Kilogram}\\
\hline
Subsample &  & \multicolumn{2}{c}{$I^N$} &\multicolumn{2}{c}{$I^O$} & \multicolumn{2}{c}{$I^S$} \\ 
\hline
benchmark   &  & [1.46 & 1.56] & [1.30 & 1.72] & [1.35 & 1.68]\\ 
high school &  & [1.48 & 1.52] & [1.31 & 1.69] & [1.43 & 1.68]\\ 
girl        &  & [1.50 & 1.59] & [1.35 & 1.74] & [1.43 & 1.72]\\ 
unmarried   &  & [1.20 & 1.30] & [0.99 & 1.51] & [1.10 & 1.48]\\ 
black       &  & [0.99 & 1.39] & [0.80 & 1.58] & [0.79 & 1.58]\\ 
no pre-visit&  & [0.00 & 0.63] & [0.00 & 1.08] & [0.00 & 1.09]\\ 
smoke       &  & [1.34 & 1.59] & [1.21 & 1.72] & [1.27 & 1.71]\\ 
older       &  & [1.30 & 1.45] & [1.13 & 1.62] & [1.24 & 1.62]\\ 
\hline
\end{tabular}
\caption{The 95\% confidence intervals for the tail index ($H$) and the first percentile ($I$) of the birth weight. 
The superscript $N$ stands for the na\"ive interval in \eqref{eq:naiveCI} and \eqref{eq:quantile_naiveCI}. 
The supersrcipt $O$ stands for the interval in \eqref{eq:honestCI} and \eqref{eq:quantile_honestCI} with $r=1$. 
The superscript $S$ stands for the interval with snooping in \eqref{eq:snoopingCI} and \eqref{eq:quantile_snoopingCI}. 
See the main text for details about the data and the definitions of subsamples.}
\label{tab:natality}
\end{table}

Table \ref{tab:natality} presents the results for the 2002 sample.
The results for other years are similar and hence omitted here to save space.
Key empirical findings from these results can be summarized as follows. 
First, $H^O$ and $H^S$ are similar in length for the tail index, while $I^O$ tends to be slightly longer than $I^S$ for the first percentile.
Second, both of them are substantially longer than the na\"ive intervals, $H^N$ an $I^N$, suggesting that ignoring the bias could lead to misleadingly short intervals.
Third, compared with the benchmark subsample, the mothers who do not have any prenatal visit during pregnancy bear a substantially higher risk of having extremely low infant birth weights. 
This observation remains true even after accounting for a possible bias.

%%%%%%%%%%%%%%%%%%%%%%%%%%%%%%%%%%%%%%%%%%%%%%%%%%
\section{Summary}\label{sec:summary}
%%%%%%%%%%%%%%%%%%%%%%%%%%%%%%%%%%%%%%%%%%%%%%%%%%

In this paper, we present two theoretical results concerning uniform confidence intervals for the tail index and extreme quantiles.
First, we find it impossible to construct a length-optimal confidence interval satisfying the correct uniform coverage over the local non-parametric family of tail distributions.
Second, in light of the impossibility result, we construct an honest confidence interval that is uniformly valid by accounting for the worst-case bias over the local non-parametric class. 
Simulation studies support our theoretical results.
While the na\"ive length-optimal confidence interval suffers from severe under-coverage, our proposed confidence intervals achieve correct coverage.
Applying the proposed method to National Vital Statistics from National Center for Health Statistics, we find that, even after accounting for the worst-case bias bound, having no prenatal visit during pregnancy remain a strong risk factor for low infant birth weight.
This result demonstrates that, despite the impossibility result, it is possible to conduct a robust yet informative statistical inference about the tail index and extreme quantiles.

\bibliographystyle{rss}
\bibliography{mybib}

\newpage
\begin{center}
{\LARGE Supplmentary Appendix \bigskip\\On Uniform Confidence Intervals for the Tail Index and the Extreme Quantile}
\end{center}
\setcounter{page}{1}
\appendix
%%%%%%%%%%%%%%%%%%%%%%%%%%%%%%%%%%%%%%%%%%%%%%%%%%
\section{Proofs}
%%%%%%%%%%%%%%%%%%%%%%%%%%%%%%%%%%%%%%%%%%%%%%%%%%
\subsection{Proof of Theorem \protect\ref{thm:impossibility}}
%%%%%%%%%%%%%%%%%%%%%%%%%%%%%%%%%%%%%%%%%%%%%%%%%%

We need the following two auxiliary lemmas to prove Theorem \ref{thm:impossibility}.
Throughout, suppose that the distributions $F_{n,h}$ and $F_{0}$ are absolutely continuous with their density functions denoted by $f_{n,h}$ and $f_{0}$, respectively.

\begin{lemma}
\label{lemma:half}
We have
\begin{equation*}
\int \left( \frac{f_{n}^{1/2}\left( y\right) }{f_{0}^{1/2}\left( y\right) }-1\right) ^{2}f_{0}\left( y\right) dy=o(1).
\end{equation*}
\end{lemma}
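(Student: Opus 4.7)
I would start by expanding the left-hand side to recognize it as the squared Hellinger distance:
\[ \int \bigl(\sqrt{f_{n,h}(y)/f_0(y)} - 1\bigr)^2 f_0(y)\,dy = 2 - 2\int \sqrt{f_{n,h}(y)\, f_0(y)}\,dy, \]
so it suffices to show the affinity $\int\sqrt{f_{n,h} f_0}\,dy \to 1$. The natural route is a change of variables based on the $F_{n,h}$-quantile representation $y = F_{n,h}^{-1}(1-t) = t^{-\xi_0} e^{\Phi_n(t)}$, writing $\phi_n(v) := d_n h(t_n^{-1}v)$ and $\Phi_n(t) := \int_t^1 \phi_n(v) v^{-1}\,dv$.

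Differentiating the quantile yields
\[ f_{n,h}\bigl(F_{n,h}^{-1}(1-t)\bigr) = \frac{t^{\xi_0+1} e^{-\Phi_n(t)}}{\xi_0 + \phi_n(t)}, \]
and, combined with $f_0(y) = y^{-1-1/\xi_0}/\xi_0$, the density ratio at $y = F_{n,h}^{-1}(1-t)$ simplifies to $(1 + \phi_n(t)/\xi_0)\, e^{-\Phi_n(t)/\xi_0}$. Changing variables rewrites the affinity as
\[ \int_0^1 \sqrt{1 + \phi_n(t)/\xi_0}\cdot e^{-\Phi_n(t)/(2\xi_0)}\,dt, \]
and it remains to show this integral tends to $1$. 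Let $M := \sup_{s>0}|h(s)|$; then $|\phi_n(t)| \leq Md_n$ uniformly and $|\Phi_n(t)| \leq Md_n|\log t|$. Splitting $[0,1]$ at $t_n^{1/2}$: on the bulk piece $[t_n^{1/2},1]$, Condition \ref{cond:k} gives $d_n|\log t_n| = A t_n^\rho |\log t_n| \to 0$, so both $\phi_n$ and $\Phi_n$ are $o(1)$ uniformly and the integrand equals $1 + o(1)$; on the shrinking piece $[0,t_n^{1/2}]$ the integrand is dominated by a constant multiple of $t^{-Md_n/(2\xi_0)}$, whose integral is of order $t_n^{1/4} = o(1)$ once $n$ is large. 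Summing the two pieces gives the affinity $= 1 + o(1)$, which is the desired conclusion.

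The main obstacle is the potential blow-up of $e^{-\Phi_n(t)/(2\xi_0)}$ as $t \downarrow 0$: when $h(0) < 0$, $\Phi_n$ diverges to $-\infty$ and the exponential factor is unbounded. The bulk/tail split is designed precisely so that the unbounded region is confined to a shrinking interval, where the integrable polynomial envelope $t^{-Md_n/(2\xi_0)}$ (integrable as soon as $Md_n/(2\xi_0) < 1$) suffices to kill the contribution.
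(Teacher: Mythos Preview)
Your argument is correct. Both you and the paper begin with the same change of variables $y=F_{n,h}^{-1}(1-t)$ and the same density-ratio identity $f_0/f_{n,h}=(1+\phi_n/\xi_0)e^{-\Phi_n/\xi_0}$, but from there the routes diverge. The paper keeps the squared integrand $\bigl(e^{\Phi_n/(2\xi_0)}-\sqrt{1+\phi_n/\xi_0}\bigr)^2 e^{-\Phi_n/\xi_0}$ and Taylor-expands both the exponential and the square root, then argues the resulting integral is $O(d_n t_n)$. You instead pass to the Hellinger affinity $\int_0^1\sqrt{1+\phi_n/\xi_0}\,e^{-\Phi_n/(2\xi_0)}\,dt$ and control it by a bulk/tail split at $t_n^{1/2}$. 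The advantage of your approach is that it confronts head-on the only real difficulty---the failure of the Taylor expansions to hold uniformly as $t\downarrow 0$, where $|\Phi_n(t)|\le Md_n|\log t|$ can blow up. The paper's step (i) writes $\exp(\cdot)=1+(\cdot)+o(\cdot)$ without addressing this non-uniformity, whereas your polynomial envelope $t^{-Md_n/(2\xi_0)}$ on $[0,t_n^{1/2}]$ handles it cleanly and gives the integrable bound $O(t_n^{1/4})$. One cosmetic point: the fact $d_n|\log t_n|=At_n^{\rho}|\log t_n|\to 0$ follows already from $d_n=u(t_n)=At_n^{\rho}$ with $\rho>0$ and $t_n\to 0$; you do not actually need Condition~\ref{cond:k} for that step.
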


\noindent
\begin{proof}[Proof of Lemma \protect\ref{lemma:half}]
By the definitions of $F_{n,h}^{-1}$ and $F_{0}^{-1}$, we can write 
\begin{align*}
f_{n,h}\left( F_{n,h}^{-1}\left( 1-t\right) \right) =&\frac{t}{\left( \xi_{0}+d_{n}h\left( {t_{n}^{-1}}t\right) \right) F_{n,h}^{-1}\left( 1-t\right) } 
\qquad\text{and} \\
f_{0}\left( F_{n,h}^{-1}\left( 1-t\right) \right) =&\left[ F_{n,h}^{-1}\left( 1-t\right) \right] ^{-1-1/\xi _{0}}/\xi _{0}.
\end{align*}
Hence, it follows that 
\begin{align*}
\frac{f_{0}\left( F_{n,h}^{-1}\left( 1-t\right) \right) }{f_{n,h}\left( F_{n,h}^{-1}\left( 1-t\right) \right) } =&\frac{\left( 1+\xi_{0}^{-1}d_{n}h\left( {t_{n}^{-1}}t\right) \right) }{F_{n,h}^{-1}\left( 1-t\right)
^{1/\xi _{0}}} 
\\
=&\frac{\left( 1+\xi _{0}^{-1}d_{n}h\left( {t_{n}^{-1}}t\right) \right) }{\exp \left( \int_{t}^{1}\frac{d_{n}h\left( {t_{n}^{-1}}v\right) }{v}dv\right) }.
\end{align*}
The change of variables $y=F_{n,h}^{-1}\left( 1-t/{t_{n}^{-1}}\right) $ yields
\begin{align*}
& \text{ \ \ \ }\int \left( \frac{f_{n,h}^{1/2}\left( y\right) }{f_{0}^{1/2}\left( y\right) }-1\right) ^{2}f_{0}\left( y\right) dy 
\\
& =\frac{1}{{t_{n}^{-1}}}\int_{0}^{{t_{n}^{-1}}}\left( \frac{f_{n,h}^{1/2}\left(F_{n,h}^{-1}\left( 1-t/{t_{n}^{-1}}\right) \right) }{f_{0}^{1/2}\left(F_{n,h}^{-1}\left( 1-t/{t_{n}^{-1}}\right) \right) }-1\right) ^{2}\frac{f_{0}\left(F_{n,h}^{-1}\left( 1-t/{t_{n}^{-1}}\right) \right) }{f_{n,h}\left(F_{n,h}^{-1}\left( 1-t/{t_{n}^{-1}}\right) \right) }dt 
\\
& =\frac{1}{{t_{n}^{-1}}}\int_{0}^{{t_{n}^{-1}}}\left[ \left( \exp \left( \frac{d_{n}}{ 2\xi _{0}}\int_{t}^{{t_{n}^{-1}}}\frac{h\left( s\right) }{s}ds\right) -\left(1+d_{n}\frac{h\left( t\right) }{\xi _{0}}\right) ^{1/2}\right) \right] ^{2}
\\
& \text{ \ \ \ \ \ \ \ \ \ \ \ \ \ \ \ \ \ \ \ \ \ \ \ \ \ \ \ \ \ \ }\times
\exp \left( -\frac{d_{n}}{\xi _{0}}\int_{t}^{{t_{n}^{-1}}}\frac{h\left( s\right) }{s }ds\right) dt 
\\
& \overset{(i)}{=}\frac{1}{{t_{n}^{-1}}}\int_{0}^{{t_{n}^{-1}}}\left[ \left( 1-\left( 1+d_{n}\frac{h\left( t\right) }{\xi _{0}}\right) ^{1/2}+\frac{d_{n}}{2\xi _{0}}\int_{t}^{{t_{n}^{-1}}}\frac{h\left( s\right) }{s}ds+o(d_{n})\right) \right]^{2} 
\\
& \text{ \ \ \ \ \ \ \ \ \ \ \ \ \ \ \ \ \ \ \ \ \ \ \ \ \ \ \ \ \ \ }\times
\left( 1-\frac{d_{n}}{\xi _{0}}\int_{t}^{{t_{n}^{-1}}}\frac{h\left( s\right) }{s}ds+o\left( d_{n}\right) \right) dt \\
& \overset{(ii)}{=}\frac{1}{{t_{n}^{-1}}}\int_{0}^{{t_{n}^{-1}}}\left[ \left( d_{n}\frac{h\left( t\right) }{\xi _{0}}+\frac{d_{n}}{2\xi _{0}}\int_{t}^{{t_{n}^{-1}}}\frac{h\left( s\right) }{s}ds+o(d_{n})\right) \right] ^{2}\left( 1+o\left(1\right) \right) dt 
\\
& \overset{(iii)}{=}O\left( d_{n}t_{n}\right) 
\\
& \overset{(iv)}{=}o(1),
\end{align*}
where 
equality (i) follows from $\exp (x)=1+x+o(x)$ as $x\rightarrow 0$, 
equality (ii) follows from $\left( 1+x\right)^{1/2}=1+x+o(x)$ as $x\rightarrow \infty $, 
equality (iii) follows from the assumptions that $h$ is uniformly bounded and square integrable for all  $h\in \mathcal{H}(A,\rho )$,
and 
equality (iv) follows from the fact that $d_{n}t_{n} \approx t_{n}^{\rho +1}$ with $t_{n}\rightarrow 0$ (under Condition \ref{cond:k}) and $\rho >0$.
\end{proof}

\bigskip
\begin{lemma}
\label{lemma:Hellinger}\textit{The Hellinger distance $H^{2}\left(f_{0},f_{n,h}\right) $ between }$f_{0}$ and $f_{n,h}$ satisfies 
\textit{
\ }
\begin{equation}
H^{2}\left( f_{0},f_{n,h}\right) \equiv \int \left( \sqrt{f_{0}\left(y\right) }-\sqrt{f_{n,h}\left( y\right) }\right) ^{2}dy=\frac{\left\vert \left\vert h\right\vert \right\vert ^{2}}{4n\xi _{0}^{2}}\left(1+o(1)\right).  \label{eq:Hellinger}
\end{equation}

\end{lemma}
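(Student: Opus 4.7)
My plan is to follow the same change of variables used in the proof of Lemma~\ref{lemma:half} but now to keep track of the leading-order $O(d_n^2)$ contribution, rather than bounding it away as $o(1)$. Under the substitution $y=F_{n,h}^{-1}(1-t/t_n^{-1})$ the Hellinger distance becomes
\[
H^2(f_0,f_{n,h}) = \frac{1}{t_n^{-1}}\int_0^{t_n^{-1}} \Bigl(\sqrt{f_0/f_{n,h}}-1\Bigr)^{2}dt,
\]
where $f_0/f_{n,h}$ is evaluated at $F_{n,h}^{-1}(1-t/t_n^{-1})$ and has the explicit form appearing in the proof of Lemma~\ref{lemma:half}. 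Applying $(1+x)^{1/2}=1+x/2+O(x^{2})$ and $\exp(x)=1+x+O(x^{2})$ to that ratio yields
\[
\sqrt{f_0/f_{n,h}}-1 = \frac{d_n}{2\xi_0}\Bigl(h(t)-\int_t^{t_n^{-1}}\frac{h(s)}{s}\,ds\Bigr)+O(d_n^{2}),
\]
so squaring and integrating gives
\[
H^2(f_0,f_{n,h}) = \frac{d_n^{2}t_n}{4\xi_0^{2}}\int_0^{t_n^{-1}}\Bigl(h(t)-\int_t^{t_n^{-1}}\frac{h(s)}{s}\,ds\Bigr)^{2}dt + R_n.
\]
By Condition~\ref{cond:k}, $d_n^{2}t_n = 1/n+o(1/n)$, which produces the $1/n$ prefactor of the claim.

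The key algebraic step is to simplify the resulting inner integral to $\|h\|^{2}$. Setting $G(t)=\int_t^{t_n^{-1}}h(s)/s\,ds$ so that $G'(t)=-h(t)/t$ and $G(t_n^{-1})=0$, a single integration by parts gives
\[
\int_0^{t_n^{-1}}G(t)^{2}\,dt = 2\int_0^{t_n^{-1}}G(t)h(t)\,dt,
\]
with the boundary contribution $tG(t)^{2}$ at $t=0$ vanishing because $\sup_s|h(s)|<\infty$ forces $|G(t)|=O(\log(t_n^{-1}/t))$. Expanding $(h-G)^{2}=h^{2}-2hG+G^{2}$ and inserting this identity produces the cancellation
\[
\int_0^{t_n^{-1}}(h-G)^{2}\,dt = \int_0^{t_n^{-1}}h(t)^{2}\,dt = \|h\|^{2},
\]
which delivers the leading-order expression in the statement.

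The main remaining obstacle is controlling the remainder $R_n$ from the $O(d_n^{2})$ corrections in the Taylor expansions. After squaring, this contributes a cross term of order $d_n^{3}|h-G|$ and a pure remainder of order $d_n^{4}$, each integrated over $[0,t_n^{-1}]$ and multiplied by the prefactor $t_n$. Using Cauchy--Schwarz together with the uniform boundedness of $h$ on $\mathcal{H}(A,\rho)$ and the fact that $d_n\to 0$, one checks that both contributions are $o(\|h\|^{2}/n)$, which completes the proof.
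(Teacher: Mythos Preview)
You take a genuinely different route from the paper. The paper invokes Proposition~2.1 of \citet{drees2001minimax} to obtain the $L^2$ approximation $n^{1/2}(f_{n,h}^{1/2}-f_0^{1/2})\approx\tfrac{1}{2} g_{n,h}f_0^{1/2}$ together with the cited facts $\int g_{n,h}f_0=0$ and $\int g_{n,h}^2 f_0\to\|h\|^2/\xi_0^2$, and then shows the residual cross term $\int f_{n,h}^{1/2}f_0^{1/2}g_{n,h}=o(1)$ via Cauchy--Schwarz and Lemma~\ref{lemma:half}. Your integration-by-parts identity $\int_0^{t_n^{-1}}G^2=2\int_0^{t_n^{-1}}hG$, yielding $\int_0^{t_n^{-1}}(h-G)^2=\int_0^{t_n^{-1}}h^2$, is essentially a direct derivation of the second cited fact, so your argument is more self-contained while the paper's is shorter by outsourcing this step to Drees.

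Two points need attention, however. First, the final equality $\int_0^{t_n^{-1}}h(t)^2\,dt=\|h\|^2$ in your key display does not follow as written: with $\mathcal{H}(A,\rho)\subset L_2[0,1]$ one has $\|h\|^2=\int_0^1 h^2$, whereas your integral runs to $t_n^{-1}\to\infty$; for merely bounded $h$ the tail $\int_1^{t_n^{-1}}h^2$ can be of order $t_n^{-1}$, and after multiplication by $d_n^2 t_n\approx 1/n$ this contributes a term of order $1/(nt_n)$, which dominates $\|h\|^2/n$. This identification is precisely the information the paper imports from \citet{drees2001minimax} rather than proving. Second, the ``$O(d_n^2)$'' remainder in your Taylor step is not uniform in $t$: since $|G(t)|\le(\sup_s|h(s)|)\log(t_n^{-1}/t)$, the argument of the exponential is unbounded near $t=0$, so the honest remainder carries powers of $G$, and your final paragraph's order counting (cross term ``$d_n^3|h-G|$'', pure remainder ``$d_n^4$'') understates it. A complete version of your argument would need to track the $G$-dependence explicitly or truncate the integral near $t=0$.
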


\noindent
\begin{proof}[Proof of Lemma \protect\ref{lemma:Hellinger}]
First, Proposition 2.1 in \citet{drees2001minimax} yields
\begin{equation}
\left[ n^{1/2}\left( f_{n,h}^{1/2}\left( y\right) -f_{0}^{1/2}\left(y\right) \right) -\frac{1}{2}g_{n,h}\left( y\right) f_{0}^{1/2}\left(y\right) \right] ^{2}dy=o(1),  
\label{eq:L2_diff}
\end{equation}
where
\begin{equation*}
g_{n,h}\left( y\right) \equiv \frac{n^{1/2}d_{n}}{\xi _{0}}\int_{y^{-1/\xi_{0}}}^{\infty }\frac{h\left( {t_{n}^{-1}}s\right) }{s}ds-h\left( {t_{n}^{-1}}y^{-1/\xi_{0}}\right) ,\text{ \ \ }y\geq 1.
\end{equation*}
Note by \citet[][p.286]{drees2001minimax} that $g_{n,h}$ satisfies 
\begin{align}
\int g_{n}\left( x\right) f_{0}\left( x\right) dx =&0  \qquad\text{and} \label{eq:integral_g}
\\
\int g_{n}^{2}\left( x\right) f_{0}\left( x\right) dx \rightarrow &\frac{\left\vert \left\vert h\right\vert \right\vert ^{2}}{\xi _{0}^{2}}.
\label{eq:integral_g2}
\end{align}
Therefore, by expanding the square in (\ref{eq:L2_diff}), the equality (\ref{eq:Hellinger}) follows once we establish 
\begin{equation*}
\int f_{n,h}^{1/2}\left( y\right) f_{0}^{1/2}\left( y\right) g_{n,h}\left(y\right) =o(1).
\end{equation*}
This equality follows as 
\begin{align*}
& \text{ \ \ }\int f_{n}^{1/2}\left( y\right) f_{0}^{1/2}\left( y\right) g_{n,h}\left( y\right) dy \\
& =\int \left( \frac{f_{n}^{1/2}\left( y\right) }{f_{0}^{1/2}\left( y\right)  }-1\right) f_{0}\left( x\right) g_{n,h}\left( y\right) dy 
\\
& \overset{(i)}{\leq }\left( \int \left( \frac{f_{n}^{1/2}\left( y\right) }{f_{0}^{1/2}\left( y\right) }-1\right) ^{2}f_{0}\left( y\right) dy\right)^{1/2}\left( \int g_{n,h}^{2}\left( y\right) f_{0}\left( y\right) dy\right)
^{1/2} 
\\
& \overset{(ii)}{=}o(1)\frac{\left\vert \left\vert h\right\vert \right\vert }{\xi _{0}}(1+o(1))^{1/2} 
 =o(1)\text{,}
\end{align*}
where 
inequality (i) follows by Cauchy Schwarz and  equality (ii) follows from Lemma \ref{lemma:half} and (\ref{eq:integral_g2}).
\end{proof}

\bigskip \noindent
\begin{proof}[Proof of Theorem \protect\ref{thm:impossibility}]
We first use Lemma \ref{lemma:Hellinger} to translate the Hellinger distance
between $f_{0}$ and $f_{n,h}$ into the $L_{1}$-distance between $f_{0}^{n}$
and $f_{n,h}^{n}$. This is done by Equation (17) in \citet{low1997} so that%
\begin{align*}
L_{1}\left( f_{0}^{n},f_{n,1}^{n}\right) \equiv &\int \left\vert
f_{0}^{n}\left( y^{\left( n\right) }\right) -f_{n,1}^{n}\left( y^{\left(
n\right) }\right) \right\vert dy^{\left( n\right) } \\
\leq &2\left( 2-2\exp \left( -\frac{\left\vert \left\vert h\right\vert
\right\vert ^{2}}{8\xi _{0}^{2}}\right) \right) ^{1/2}\left( 1+o(1)\right) \\
\leq &2\left( 2-2\exp \left( -\frac{\varepsilon ^{2}}{8\xi _{0}^{2}}\right)
\right) ^{1/2}\left( 1+o(1)\right) \\
\equiv &C\left( \varepsilon ,\xi _{0}\right) \left( 1+o(1)\right) .
\end{align*}

Next, let $\delta $ be arbitrary. By the definition of $\omega \left(
\varepsilon ,F_{0}^{-1},n\right) $, there exists an $h\in \mathcal{H}_{\rho
}\left( \overline{h}\right) $ and $\left\vert \left\vert h\right\vert \right\vert
\leq \varepsilon $ such that%
\begin{equation*}
\left\vert T_{r}\left(F_{n,h}\right) -T_{r}\left(
F_{0}^{-1}\right) \right\vert \geq \omega \left( \varepsilon
,F_{0}^{-1},n\right) -\delta .
\end{equation*}

Let $F_{n,\lambda h}^{-1}=Q_{n,\overline{k}_{n},F_{n,\lambda h}}$ for a short-handed notation. Also, let $f_{n,\lambda h}$ and $\mathbb{P}_{F_{0}^{n}}$ denote the corresponding density and the probability measure, respectively. Since $H\left( Y^{\left( n\right) }\right) $ has probability of coverage of at least $1-\beta $ uniformly over $f_{n,\lambda h}$, it follows that, for any $\lambda \in \left[ 0,1\right] $, 
\begin{equation*}
\mathbb{P}_{F_{n,\lambda h}^{n}}\left( T_{r}\left( F_{n,\lambda h}^{-1}\right) \in H\left( Y^{\left( n\right) }\right) \right) \geq 1-\beta
\end{equation*}
and
\begin{equation*}
\left\vert \left\vert \lambda h\right\vert \right\vert \leq \lambda \left\vert \left\vert h\right\vert \right\vert \leq \lambda \varepsilon .
\end{equation*}
Then, we have
\begin{align*}
&\mathbb{P}_{F_{0}^{n}}\left( T_{r}\left( F_{n,\lambda h}^{-1}\right) \in H\left( Y^{\left( n\right) }\right) \right) 
\\
=&\mathbb{E}_{F_{0}^{n}}\left[ 1 \left[ T_{r}\left( F_{n,\lambda h}^{-1}\right) \in H\left( Y^{\left(
n\right) }\right) \right] \right] 
\\
=&\mathbb{E}_{F_{n,\lambda h}^{n}}\left[ 1\left[ T_{r}\left( F_{n,\lambda h}^{-1}\right) \in H\left( Y^{\left( n\right) }\right) \right] \left( 1+ \frac{f_{0}^{n}-f_{n,h}^{n}}{f_{n,h}^{n}}\right) \right] 
\\
\geq &1-\beta +\mathbb{E}_{F_{n,\lambda h}^{n}}\left[ \frac{ f_{0}^{n}-f_{n,\lambda h}^{n}}{f_{n,\lambda h}^{n}}\right] 
\\
\geq &1-\beta -\int \left\vert f_{n,\lambda h}^{n}\left( y\right) -f_{0}^{n}\left( y\right) \right\vert dy 
\\
\geq &1-\beta -\lambda C.
\end{align*}
By the same lines of argument as in equations (12)--(15) in \citet{low1997}, we obtain
\begin{align*}
&\mathbb{E}_{F_{0}^{n}}\left[ \mu \left( H\left( Y^{\left( n\right) }\right)\right) \right] 
\\
=&\mathbb{E}_{F_{0}^{n}}\left[ \int_{ \mathbb{R} }1\left[ t\in \mu \left( H\left( y^{\left( n\right) }\right) \right) \right] dt\right] 
\\
\geq &\mathbb{E}_{F_{0}^{n}}\int_{0}^{1}\left( T_{r}\left( F_{n,h}^{-1}\right) -T_{r}\left( F_{0}^{-1}\right) \right) 1\left[ T_{r}\left( F_{n,\lambda h}^{-1}\right) \in \mu \left( H\left( y^{\left( n\right) }\right) \right) \right] d\lambda 
\\
=&\left( T_{r}\left( F_{n,h}^{-1}\right) -T_{r}\left( F_{0}^{-1}\right) \right) \int_{0}^{1}\mathbb{P}_{F_{0}^{n}}\left( T_{r}\left( F_{n,\lambda h}^{-1}\right) \in H\left( Y^{\left( n\right) }\right) \right) d\lambda 
\\
\geq &\left( T_{r}\left( F_{n,h}^{-1}\right) -T_{r}\left( F_{0}^{-1}\right) \right) \int_{0}^{1}\left( 1-\beta -\lambda C\left( \varepsilon ,\xi _{0}\right) \right) d\lambda 
\\
=&\left( \omega \left( \varepsilon ,F_{0}^{-1},n\right) -\delta \right) \left( 1-\beta -\frac{C\left( \varepsilon ,\xi _{0}\right) }{2}\right) .
\end{align*}
The inequality (\ref{eq:lb}) now follows since $\delta $ is set to be arbitrary.
\end{proof}

%%%%%%%%%%%%%%%%%%%%%%%%%%%%%%%%%%%%%%%%%%%%%%%%%%
\subsection{Proof of Theorem \protect\ref{thm:index}}
%%%%%%%%%%%%%%%%%%%%%%%%%%%%%%%%%%%%%%%%%%%%%%%%%%

\begin{proof}
    First, we approximate the empirical tail quantile function $Q_{n,r\bar{k}_{n},F_{n,h}}$ with the partial sum process of the standard exponential random variables. 
Let $\eta _{i}$ for $i=1,2,...$ be i.i.d. standard exponential random variables and $S_{i}\equiv \sum_{j=1}^{i}\eta _{j}$, and $\tilde{Q}_{n,r\bar{k}_{n},F}\equiv F^{-1}\left( 1-S_{\left[ \bar{k}_{n}r\right] +1}/n\right)$, $r\in ( 0,1]$.
Let $\mathbb{P}_{n,h}^{n}$ denote the joint distribution of $n$ i.i.d. draws from the distribution $F_{n,h}$. 
By \citet[][Theorem 5.4.3]{Reiss1989} -- see also \citet[][eq.(5.12)]{drees2001minimax} -- the variational distance between the distribution of $Q_{n,r\bar{k}_{n},F_{n,h}}$ under $\mathbb{P}_{n,h}^{n}$ and the distribution of $\tilde{Q}_{n,r\bar{k}_{n},F}$ vanishes uniformly as $n\rightarrow \infty $, that is,
\begin{equation}
\label{eq:reiss1989}
\left\vert \left\vert \mathcal{L}\left( Q_{n,r\bar{k}_{n},F_{n,h}}|\mathbb{P}_{n,h}^{n}\right) -\mathcal{L}\left( \tilde{Q}_{n,r\bar{k}_{n},F}\right) \right\vert \right\vert =O\left( \bar{k}_{n}/n\right) =o(1),
\end{equation}
which implies that
\begin{equation}
\left\vert \left\vert \mathcal{L}\left( T_{r}\left( Q_{n,r\bar{k}_{n},F_{n,h}}\right) |\mathbb{P}_{n,h}^{n}\right) -\mathcal{L}\left( T_{r}\left( \tilde{Q}_{n,r\bar{k}_{n},F}\right) \right) \right\vert \right\vert =o(1)
\label{eq:weak_conv}
\end{equation}
uniformly for all $r\in \lbrack \underline{r},1]$ and $h\in \mathcal{H}\left(A,\rho \right) $. 
Recall from Section \ref{sec:hill} that $T_{r}\left( z\right) =r^{-1}\int_{0}^{r}z\left( t\right) /z\left( r\right) dt$ characterizes Hill's estimator. Hence it suffices to approximate $\tilde{Q}_{n,r\bar{k}_{n},F}$. 
To this end, we employ a strong approximation of $\tilde{Q}_{n,r\bar{k}_{n},F}$ with $F_{n,h}^{-1}\left( 1-\overline{k}_{n}/n\right) $.
Specifically, using \citet[][eq.(5.13)]{drees2001minimax}, we obtain 
\begin{align}
\sup_{h\in \mathcal{H}\left( A,\rho \right) }\sup_{r\in (0,1]}r^{\xi_{0}+1/2+\varepsilon }\Bigg\vert\frac{\tilde{Q}_{n,r\bar{k}_{n},F}}{
F_{n,h}^{-1}\left( 1-\overline{k}_{n}/n\right) }-\Bigg(r^{-\xi _{0}}-\xi_{0}r^{-\left( \xi _{0}+1\right) }\frac{W\left( \overline{k}_{n}r\right) }{
\overline{k}_{n}}  \notag \\
+d_{n}r^{-\xi _{0}}\int_{r}^{1}\frac{h\left( v\right) }{v}dv\Bigg)\Bigg\vert=o\left( \overline{k}_{n}^{-1/2}\right) \text{ a.s.}
\label{eq:quantile_process}
\end{align}
for all $\varepsilon \in \left( 0,1/2\right) $, where $W\left( \cdot \right) $ denotes the standard Wiener process.

In the second step, we exploit the feature of the functional $T_{r}\left(\cdot \right) $. 
Since $T_{r}\left( \cdot \right) $ is scale invariant such that $T_{r}\left( az\right) =T_{r}\left( z\right) $ for any constant $a$, we have 
\begin{equation*}
T_{r}\left( \frac{\tilde{Q}_{n,r\bar{k}_{n},F}}{F_{n,h}^{-1}\left( 1-\overline{k}_{n}/n\right) }\right) =T_{r}\left( \tilde{Q}_{n,r\bar{k}_{n},F}\right) .
\end{equation*}
Moreover, $T_{r}\left( \cdot \right) $ is Hadamard differentiable at $z_{\xi_{0}}:t\longmapsto t^{-\xi _{0}}$ \citep[cf.][Condition 3]{drees1998smooth} in the sense that
\begin{equation}
\frac{T_{r}\left( z_{\xi _{0}}+d_{n}y_{n}\right) -T_{r}\left( z_{\xi_{0}}\right) }{d_{n}}\rightarrow T_{r}^{\prime }\left( y\right)
\label{eq:hadamard}
\end{equation}
uniformly for all functions $y_{n}$ with $\sup_{t\in (0,1]}t^{\xi _{0}+1/2+\varepsilon }\left\vert y_{n}\left( t\right) \right\vert \leq 1$
and $y_{n}\rightarrow y$. 
To derive the expression of $T_{r}^{\prime }\left(\cdot \right) $, we write  
\begin{align*}
T_{r}\left( z_{\xi _{0}}+d_{n}y_{n}\right) -T_{r}\left( z_{\xi _{0}}\right)  =&r^{-1}\int_{0}^{r}\log \left( \frac{t^{-\xi _{0}}+d_{n}y_{n}\left(
t\right) }{r^{-\xi _{0}}+d_{n}y_{n}\left( r\right) }\frac{r^{-\xi _{0}}}{t^{-\xi _{0}}}\right) dt \\
=&r^{-1}\int_{0}^{r}\log \left( 1+d_{n}x_{n}\left( t\right) \right) dt,
\end{align*}
where 
\begin{equation*}
x_{n}\left( t\right) =\frac{t^{\xi _{0}}y_{n}\left( t\right) -r^{\xi_{0}}y_{n}\left( r\right) }{1+d_{n}r^{\xi _{0}}y_{n}\left( r\right) }\rightarrow t^{\xi _{0}}y\left( t\right) -r^{\xi _{0}}y\left( r\right) .
\end{equation*}
Following the derivation in \citet[][p.104]{drees1998estimate}, we obtain
\begin{equation*}
T_{r}^{\prime }\left( y\right) =r^{-1}\int_{0}^{r}\left( t^{\xi _{0}}y\left( t\right) -r^{\xi _{0}}y\left( r\right) \right) dt\text{. }
\end{equation*}

In the final step, we substitute $z_{\xi _{0}}\left( t\right) =t^{-\xi _{0}}$,  $d_{n}\approx \bar{k}_{n}^{-1/2}$, and
\begin{equation*}
y_{n}\left( t\right) =\xi _{0}t^{-\left( \xi _{0}+1\right) }\frac{W\left( 
\overline{k}_{n}t\right) }{\overline{k}_{n}^{1/2}}+t^{-\xi _{0}}\int_{t}^{1}\frac{h\left( v\right) }{v}dv.
\end{equation*}
By (\ref{eq:quantile_process}), (\ref{eq:hadamard}), and the functional delta method,  we have
\begin{equation}
\sup_{h\in \mathcal{H}_{\rho }}\sup_{r\in (0,1]}\left\vert  \begin{array}{c} T_{r}\left( \tilde{Q}_{n,r\bar{k}_{n},F}\right) -\left( \xi _{0}+\bar{k}_{n}^{-1/2}\xi _{0}T_{r}^{\prime }\left( z_{\xi _{0}+1}W_{n}\right) \right) 
\\ 
+d_{n}T_{r}^{\prime }\left( t^{-\xi _{0}}\int_{t}^{1}\frac{h\left( v\right) }{v}dv\right) 
\end{array}
\right\vert =o\left( \overline{k}_{n}^{-1/2}\right) \text{ a.s.}
\label{eq:xi_approx}
\end{equation}
where $W_{n}:t\longmapsto -\bar{k}_{n}^{-1/2}W\left( \bar{k}_{n}t\right) $. 
Use the definition of $T_{r}^{\prime }\left( \cdot \right) $ to obtain that 
\begin{equation}
T_{r}^{\prime }\left( z_{\xi _{0}+1}W_{n}\right) =\frac{1}{r} \int_{0}^{r}\left( s^{-1}W\left( s\right) -r^{-1}W\left( r\right) \right) ds\equiv \mathbb{G}\left( r\right)   \label{eq:xi_G}
\end{equation}
and 
\begin{align}
T_{r}^{\prime }\left( t^{-\xi _{0}}\int_{t}^{r}\frac{h\left( v\right) }{v} dv\right) -h\left( 0\right)  =&r^{-1}\int_{0}^{r}\left( \int_{s}^{1}\frac{%
h\left( v\right) }{v}dv-\int_{r}^{1}\frac{h\left( v\right) }{v}dv\right) ds-h\left( 0\right)   \notag \\
=&r^{-1}\int_{0}^{r}\left( \int_{s}^{r}\frac{h\left( v\right) }{v}dv\right) ds-h\left( 0\right)   \notag \\
=&r^{-1}\int_{0}^{r}\left( \int_{s}^{r}\frac{h\left( v\right) -h\left( 0\right) }{v}dv\right) ds  \notag \\
\equiv &\mathbb{B}\left( r;h\right) ,  \label{eq:xi_B}
\end{align}
where we used the equality $r^{-1}\int_{0}^{r}\left( \log t-\log r\right) dt=1$. 
Thus, in view of $\xi _{n,h}=\xi _{0}+d_{n}h\left( 0\right) $ and combining (\ref{eq:xi_approx}) to (\ref{eq:xi_B}), we find
\begin{equation*}
T_{r}\left( \tilde{Q}_{n,t\bar{k}_{n},F}\right) -\xi _{n,h}=\bar{k}_{n}^{-1/2}\xi _{0}\mathbb{G}\left( r\right) +d_{n}\mathbb{B}\left( r;h\right) +o_{\text{a.s.}}( \overline{k}_{n}^{-1/2} ),
\end{equation*}
where $o_{\text{a.s.}}(\cdot )$ is uniform for all $r\in \lbrack \underline{r},1]$ and $h\in \mathcal{H}\left( A,\rho \right) $. 
We conclude from (\ref{eq:weak_conv}) that
\begin{align*}
\sup_{h\in \mathcal{H}\left( A,\rho \right) }\sup_{r\in \lbrack \underline{r},1]}\sup_{x\in \mathbb{R} }\left\vert \mathbb{P}_{n,h}^{n}\left( T_{r}\left( {Q}_{n,r\bar{k}_{n},F_{n,h}}\right) -\xi _{n,h}\leq d_{n}x\right) -\mathbb{P}\left( \xi _{0} \mathbb{G}\left( r\right) +\mathbb{B}\left( r;h\right) \leq x\right) \right\vert 
\\
\rightarrow 0.
\end{align*}
This implies that
\begin{equation*}
\overline{k}_{n}^{1/2}\left( T_{r}\left( Q_{n,r\bar{k}_{n},F_{n,h}}\right) -\xi _{n,h}\right) =\xi _{0}\mathbb{G}\left( r\right) +\mathbb{B}\left( r;h\right) +o_{\mathbb{P}_{n,h}^{n}}\left( 1\right) ,
\end{equation*}
where $o_{\mathbb{P}_{n,h}^{n}}\left( 1\right)$ is uniform for all $r\in \lbrack \underline{r},1]$ and $h\in \mathcal{H}\left( A,\rho \right) $. 
This completes the proof.
\end{proof}

%%%%%%%%%%%%%%%%%%%%%%%%%%%%%%%%%%%%%%%%%%%%%%%%%%
\subsection{Proof of Theorem \protect\ref{thm:quantile}}
%%%%%%%%%%%%%%%%%%%%%%%%%%%%%%%%%%%%%%%%%%%%%%%%%%

\noindent
\begin{proof}
We use the notation $g_{n}=\overline k_{n}/\left( np_{n}\right)$.
Condition \ref{cond:p} guarantees that $g_n \rightarrow \infty$. Using the tail quantile process (\ref{eq:quantile_process}) with $t=1$,
we obtain 
\begin{align*}
&\frac{\sqrt{r\overline{k}_n}}{\log \left( g_{n}\right) }\left( \frac{\hat{F}%
_{n,h}^{-1}\left( 1-p_{n}\right) }{F_{n,h}^{-1}\left( 1-p_{n}\right) }%
-1\right) 
\\
=&\frac{\sqrt{r\overline{k}_n}}{\log \left( g_{n}\right) }\left( \frac{%
Y_{n,n- \left\lfloor r\overline k_n \right\rfloor}}{F_{n,h}^{-1}\left( 1-r \overline k_{n}/n\right) }g_{n}^{\hat{\xi}\left(
n,r \overline k_n\right) }\frac{F_{n,h}^{-1}\left( 1- r \overline k_{n}/n\right) }{F_{n,h}^{-1}\left(
1-p_{n}\right) }-1\right) \\
=&g_{n}^{\xi _{n,h}}\frac{F_{n,h}^{-1}\left( 1-r \overline k_{n}/n\right) }{%
F_{n,h}^{-1}\left( 1-p_{n}\right) }\left\{ \frac{\sqrt{r\overline{k}_n}}{\log
\left( g_{n}\right) }\left( g_{n}^{\hat{\xi}\left( n,r \overline k_n\right) -\xi
_{n,h}}-1\right) \right. \\
&+\frac{\sqrt{r\overline{k}_n}}{\log \left( g_{n}\right) }\left( \frac{Y_{n,n- \lfloor r \overline k_n \rfloor }}{%
F_{n,h}^{-1}\left( 1-r \overline k_{n}/n\right) }-1\right) g_{n}^{\hat{\xi}\left(
n,r \overline k_n\right) -\xi _{n,h}} \\
&\left. -\frac{\sqrt{r\overline{k}_n}}{\log \left( g_{n}\right) }\left( \frac{%
F_{n,h}^{-1}\left( 1-p_{n}\right) }{F_{n,h}^{-1}\left( 1-r\overline k_{n}/n\right) }%
g_{n}^{-\xi _{n,h}}-1\right) \right\} \\
\equiv & C_{1n}\left( C_{2n}+C_{3n}-C_{4n}\right) .
\end{align*}%
We aim to establish 
\begin{align}
C_{1n} =&1+o(1),  \label{eq:C1} \\
C_{2n} =&\sqrt{r\overline{k}_n}\left( \hat{\xi}\left( n,r \overline k_n\right) -\xi _{n,h}\right)
+o_{\mathbb{P}^n_{n,h}}(1) ,  \label{eq:C2} \\
C_{3n} =&o_{\mathbb{P}^n_{n,h}}(1), \qquad\text{and}  \label{eq:C3} \\
C_{4n} =&o(1),  \label{eq:C4}
\end{align}
where the $o_{p}\left( 1\right) $ and $o_{\mathbb{P}^n_{n,h}}(1)$ terms are all uniform over both 
$h\in \mathcal{H}_{\rho }\left( A,\rho \right) $ and $r\in \lbrack 
\underline{r},1]$.

First, Condition \ref{cond:k} yields that $r\overline{k}_n/\left(nt_{n}\right) \rightarrow r$. Thus, (\ref{eq:C1}) follows from 
\begin{align*}
C_{1n} & \overset{(i)}{=}\exp \left( \int_{r \overline k_{n}/n}^{1}\frac{d_{n}\left(
h\left( {t_{n}^{-1}}v\right) -h\left( 0\right) \right) }{v}dv-\int_{p_{n}}^{1}\frac{%
d_{n}\left( h\left( {t_{n}^{-1}}v\right) -h\left( 0\right) \right) }{v}dv\right) \\
& =\exp \left( d_{n}\int_{\min \{r \overline k_{n}/n,p_{n}\}}^{\max \{r \overline k_{n}/n,p_{n}\}}%
\frac{h\left( {t_{n}^{-1}}v\right) -h\left( 0\right) }{v}dv\right) \\
& \overset{(ii)}{=}\exp \left( d_{n}\int_{\min \{g_{n},p_{n}{t_{n}^{-1}}\}}^{\max
\{g_{n},p_{n}{t_{n}^{-1}}\}}\frac{h\left( s\right) -h\left( 0\right) }{s}ds\right)
\\
& \leq \exp \left( d_{n}\int_{0}^{1}\frac{\left\vert h\left( s\right)
-h\left( 0\right) \right\vert }{s}ds\right) \\
& \overset{(iii)}{\leq }\exp \left( d_{n}\int_{0}^{1}\frac{As^{\rho }}{s}%
ds\right) \\
& \overset{(iv)}{=}1+O(d_{n}),
\end{align*}%
where 
equality (i) follows from (\ref{eq:local_alternative}), 
equality (ii) follows from a
change of variables, 
inequality (iii) follows from $\left\vert
h\left( s\right) -h\left( 0\right) \right\vert \leq As^{\rho }$ for $h
\in \mathcal{H}\left( A,\rho \right) $, and 
equality (iv) follows from $d_{n}\rightarrow 0$ and $\exp \left(
a_{n}\right) =1+a_{n}+O(a_{n}^{2})$ for any sequence $a_{n}\rightarrow 0$.

Next, consider $C_{2n}$. Note that $( \hat{\xi}\left( n,r \overline k_n\right) -\xi
_{n,h}) \log g_{n} = \sqrt{r \overline k_{n}}( \hat{\xi}\left( n,r \overline k_n\right) -\xi
_{n,h}) \times \log g_{n}/\sqrt{r \overline k_{n}} = o_{\mathbb{P}^n_{n,h}}(1) $ uniformly for all $%
r \in \lbrack \underline{r}, 1]$ and $h\in \mathcal{H}\left( A,\rho
\right) $ by Theorem \ref{thm:index} and Condition \ref{cond:p}. Then, using $%
\exp (a_{n})=1+a_{n}+O(a_{n}^{2})$ for any generic sequence $%
a_{n}\rightarrow 0$, we have that%
\begin{align*}
&\frac{\sqrt{r\overline{k}_n}}{\log \left( g_{n}\right) }\left( g_{n}^{\hat{\xi}%
\left( n,r \overline k_n\right) -\xi _{n,h}}-1\right) \\
=&\sqrt{r \overline k_{n}}\left( \hat{\xi}\left( n,r \overline k_n\right) -\xi _{n,h}\right) \frac{%
\exp \left( \left( \hat{\xi}\left( n,r \overline k_n\right) -\xi _{n,h}\right) \log
g_{n}\right) -1}{\left( \hat{\xi}\left( n,r \overline k_n\right) -\xi _{n,h}\right) \log
g_{n}} \\
=&\sqrt{r \overline k_{n}}\left( \hat{\xi}\left( n,r \overline k_n\right) -\xi _{n,h}\right) \left(
1+O\left( \left( \hat{\xi}\left( n,r \overline k_n\right) -\xi _{n,h}\right) \log
g_{n}\right) \right) \\
=&\sqrt{r \overline k_{n}}\left( \hat{\xi}\left( n,r \overline k_n\right) -\xi _{n,h}\right) \left(
1+o_{\mathbb{P}^n_{n,h}}(1)\right) 
\end{align*}%
uniformly for all $r \in \lbrack \underline{r},1]$ and $h\in \mathcal{%
H}\left( A,\rho \right) $. This yields (\ref{eq:C2}).

Next, consider $C_{3n}$. The equivalence (\ref{eq:reiss1989}) and the tail quantile approximation (\ref{eq:quantile_process}) imply that
\begin{equation*}
\sqrt{r \overline k_{n}}\left( \frac{Y_{n,n-\lfloor r \overline k_n \rfloor }}{F_{n,h}^{-1}\left( 1-r \overline k_{n}/n\right) }%
-1\right) =O_{\mathbb{P}^n_{n,h}}(1)
\end{equation*}%
uniformly  for all $r \in \lbrack \underline{r},%
1]$ and $h\in \mathcal{H}\left( A,\rho \right) $.
The previous step deriving (\ref{eq:C2}) also implies that $g_{n}^{\hat{\xi}%
\left( n,r \overline k_n\right) -\xi _{n,h}}=1+o_{\mathbb{P}^n_{n,h}}(1)$ uniformly for all $r \in \lbrack 
\underline{r},1]$ and $h\in \mathcal{H}\left( A,\rho \right) $.
Therefore, $C_{3n}=\left( \log g_{n}\right) ^{-1}O_{\mathbb{P}^n_{n,h}}(1)\left(
1+o_{\mathbb{P}^n_{n,h}}(1)\right) =o_{\mathbb{P}^n_{n,h}}(1)$ uniformly for all $r\in \lbrack \underline{r},%
1]$ and $h\in \mathcal{H}\left( A,\rho \right) $, implying (\ref%
{eq:C3}).

Finally, consider $C_{4n}$. We have 
\begin{align*}
C_{4n} & \overset{(i)}{=}\frac{\sqrt{r\overline{k}_n}}{\log \left( g_{n}\right) }%
\left( \exp \left( d_{n}\int_{\min \{p_{n},r \overline k_{n}/n\}}^{\max
\{p_{n},r \overline k_{n}/n\}}\frac{\left( h\left( {t_{n}^{-1}}v\right) -h\left( 0\right)
\right) }{v}dv\right) -1\right) \\
& \overset{(ii)}{=}\frac{\sqrt{r\overline{k}_n}}{\log \left( g_{n}\right) }\times
O\left( d_{n}\right) \\
& \overset{(iii)}{=}o(1)
\end{align*}%
uniformly for all $r\in \lbrack \underline{r%
},1]$ and $h\in \mathcal{H}\left( A,\rho \right) $,
where 
equality (i) follows from substituing (\ref{eq:local_alternative}), 
equality (ii) follows
from the derivation of (\ref{eq:C1}), and 
equality (iii) follows from that $%
\sqrt{r \overline k_n} d_{n}=O(1)$ as in Condition \ref{cond:k} and $\log (g_{n})\rightarrow \infty $ as in Condition \ref{cond:p}. The proof is
complete by combining (\ref{eq:C1})--(\ref{eq:C4}).
\end{proof}

%%%%%%%%%%%%%%%%%%%%%%%%%%%%%%%%%%%%%%%%%%%%%%%%%%
\section{Appendix: Choice of $\overline{k}_n$}\label{sec:data_driven_k}
%%%%%%%%%%%%%%%%%%%%%%%%%%%%%%%%%%%%%%%%%%%%%%%%%%
In this appendix section, we present a data-driven choice rule of $\overline{k}_n$ following \citet{guillou2001diagnostic} for completeness and for convenience of readers. 
For other methods of choosing the tail threshold, see, for example, \citet{dreesKaufmann1998select}, \citet{geluk2000}, \citet{danielsson2001using}, and \citet{danielsson2016}. 
See also \citet[][Chapter 4.4]{resnick2007book} for a review. 

We use the shorthand notation $k = \overline{k}_n$ in this section for notational simplicity. 
Define $Z_{i}=i\log (Y_{n:n-i+1}/Y_{n:n-i})$ for $i=1,\ldots ,n$. 
If $Y_{i}$ is exactly Pareto distributed with exponent $1/\xi_0 $, then $Z_{i}$ should be i.i.d. with exponential distribution and $\mathbb{E}\left[ Z_{i}\right] =\xi_0 $. 
Given this observation, we further construct the antisymmetric weights $\{w_{j}\}_{j=1}^{k}$\ such that $w_{j}=-w_{k-j+1}$\ and $\sum_{j=1}^{k}w_{j}=0$.
Then, the statistic
\begin{equation*}
\mathcal{T}_{k}=\left( \sum_{j=1}^{k}w_{j}^{2}\right) ^{-1/2}\hat{\xi}(n,k)^{-1}U_{k}\text{ where } U_{k}=\sum_{j=1}^{k}w_{j}Z_{j}
\end{equation*}
has zero mean and unit variance provided that $Y_{i}$ is exactly Pareto and $\hat{\xi}(n,k)=\xi_0 $.

To evaluate a deviation of the above approximation, we further define the following
criterion based on a moving average of $\mathcal{T}_{k}^{2}$: 
\begin{equation*}
\mathcal{C}_{k}=\left( \left( 2l+1\right) ^{-1}\sum_{j=-l}^{l}\mathcal{T}_{k+j}^{2}\right) ^{1/2},
\end{equation*}
where $l$ equals the integer part of $k/2$. 
Intuitively, the larger $k$ is, the larger bias we have in the Pareto tail approximation, and hence by more $\mathcal{C}_{k}$ exceeds one. 
To obtain an implementable rule, we follow \citet{guillou2001diagnostic} to use $w_{j}=sgn\left( k-2j+1\right) \left\vert k-2j+1\right\vert $ and propose to choose the smallest $k$ that satisfies $\mathcal{C}_{t}>c_{\text{crit}}$ for all $t\geq k$ and some pre-specified constant $c_{\text{crit}}$. 
Again, following \citet{guillou2001diagnostic}, we set $c_{\text{crit}}=1.25$.
For convenience of reference, we write the concrete equation:
\begin{equation}
\hat{k}=\min_{1\leq k\leq n}\{k:\mathcal{C}_{t}>c_{\text{crit}}\text{ for all }t\geq k\}.  \label{k choice}
\end{equation}

\end{document}